\documentclass[draft]{amsart}

\usepackage{amsmath}
\usepackage{amssymb}
\usepackage{amsopn}
\usepackage{amsthm}
\usepackage[latin1]{inputenc}
\usepackage[british]{babel}

\usepackage{tikz}
\usetikzlibrary{through}
\usepackage{pgffor}

\newtheorem{Thm}{Theorem}
\newtheorem{Cor}{Corollary}
\newtheorem{Lem}{Lemma}

\theoremstyle{definition}
\newtheorem{Def}{Definition}
\newtheorem{Ex}{Example}

\theoremstyle{remark}

\newcommand{\nat}{\mathbb{N}}

\DeclareMathOperator{\homology}{\mathrm H}         

\newcommand{\im}{\mathop{\rm Im}\nolimits}         
\newcommand{\kernel}{\mathop{\rm Ker}\nolimits}    
\newcommand{\cpx}[1]{#1_{\bullet}}                 

\newcommand{\digraph}[1]{\Gamma_{#1}}
\newcommand{\morsegraph}[2]{\Gamma_{#1}^{#2}}

\newcommand{\lcm}{\mathop{\rm lcm}\nolimits}       
\newcommand{\redpaths}[2]{[#1 \rightsquigarrow #2]}

\newcommand{\sgn}[2]{\mathop{\varepsilon(#1;#2)}\nolimits}

\newcommand{\initial}{\mathop{\rm in}\nolimits}

\newcommand{\component}[2]{#1_{\langle#2\rangle}}

\newcommand{\nf}{\mathop{\rm nf}\nolimits}

\newcommand{\crit}{\mathop{\rm crit}\nolimits}
\newcommand{\ncrit}{\mathop{\rm ncrit}\nolimits}
\newcommand{\ccrit}{\mathop{c{\rm \mbox{-}crit}}\nolimits}
\newcommand{\supp}{\mathop{\rm supp}\nolimits}
\renewcommand{\epsilon}{\varepsilon}
\renewcommand{\phi}{\varphi}
\newcommand{\canonicalsurj}{\eta}

\DeclareMathOperator{\alt}{\mathrm Alt}   

\newcommand{\card}[1]{|#1|}

\newcommand{\revlexlesseq}{\leq_{\mathrm{revlex}}}
\newcommand{\lexless}{<_{\mathrm{lex}}}

\newcommand{\resbasis}[2]{e_{#1} g_{#2}}           
\newcommand{\bigbasis}[3]{e_{#1} x^{#2} g_{#3}}    
\newcommand{\altbasis}[2]{e_{#1} #2}               
\newcommand{\altbigbasis}[3]{e_{#1} #2 #3}         
\newcommand{\modbasis}[1]{g_{#1}}                  

\title{Resolutions of modules with initially linear syzygies}

\author{Emil Sk\"oldberg}

\address{School of Mathematics, Statistics and Applied Mathematics\\
         National University of Ireland, Galway\\
         Ireland}

\email{emil.skoldberg@nuigalway.ie}
\urladdr{http://www.maths.nuigalway.ie/~emil/}
\subjclass[2010]{Primary 13D02}
\date{\today}

\begin{document}

\maketitle

\begin{abstract}
  We introduce the class of modules with initially linear syzygies,
  a class including ideals with linear quotients, and study their minimal
  resolutions. Using a contracting homotopy for the resolutions, we
  see that the minimal resolution of a matroidal monomial ideal admits
  a DGA structure.
\end{abstract}

\section{Introduction}

Let $k$ be a field and $S = k[x_1,\dots,x_n]$, the polynomial ring over
$k$. In this paper we introduce the notion of modules with initially
linear syzygies, and study the structure of their minimal resolutions.
We show how to construct the minimal resolution of a module with
initially linear syzygies using discrete Morse theory,
and we then show that such modules are componentwise linear;
finally, at the end of the paper we investigate
multiplicative properties, and
we show that the minimal free resolution of
$S/I$ admits a differential graded algebra structure,
where $I$ is either a stable monomial ideal or a
squarefree matroidal ideal. The result
for stable ideals is not new, it has been shown by
Peeva~\cite{peeva:borelfixed}, but we obtain a simpler proof.
The result for squarefree matroidal ideals is new, however.
We finish the paper
by calculating the product on a part of the minimal resolution of the
ideal coming from the Fano matroid as an illustration of the results.

\section{Modules with initially linear syzygies}
\begin{Def}
  A presentation of a finitely generated graded $S$-module $M$,
  \begin{equation} \label{eq:in_lin_syz}
    0 \longrightarrow
    \kernel \canonicalsurj \longrightarrow
    \bigoplus_i S \cdot g_i  \stackrel{\canonicalsurj}{\longrightarrow}
    M \longrightarrow 0,
  \end{equation}
  is said to have \emph{initially linear syzygies} with respect to a
  term order $\prec$ on the free module  $\bigoplus_i S \cdot
  g_i$, if
  $\kernel \canonicalsurj \subseteq \bigoplus_i \mathfrak{m} \cdot g_i$,
  and if the initial module $\initial_{\prec}(\kernel \canonicalsurj)$ is
  generated by terms of the form $x_j g_i$.
\end{Def}
We will say that $M$ has initially linear syzygies if it
has such a presentation for some choice of generating set
$\{g_1, \ldots, g_n\}$ and term order $\prec$.

Ideals with initially linear syzygies generalise ideals
with linear quotients; let us recall that an ideal $I$ has linear
quotients if there are elements $f_1, f_2, \ldots, f_n$ such
that $I = (f_1, \ldots, f_n)$, and for each $1 < i \leq n$, the colon
ideal $(f_1,\dots,f_{i-1}):f_i$ is generated by linear forms.
It is not hard to see that a homogeneous ideal has linear quotients if
and only if it has initially linear syzygies with respect to a
position over term order, which is a term order on a free module
$\bigoplus_i S \cdot g_i$ such that $g_i < g_j$ implies that
$x^{\alpha}g_{i} < x^{\beta}g_{j}$ for all $\alpha$ and $\beta$.
Ideals with linear quotients in turn generalise shellable monomial
ideals, a concept introduced by Batzies and
Welker~\cite{batzies_welker:morse_cellular}.
A monomial ideal $I$ with minimal
monomial generators $m_1,\dots,m_t$ is \emph{shellable} if there is
a total order
``$\sqsubset$'' on $m_1,\dots,m_t$ such that for $m_j, m_i$ with
$m_j \sqsubset m_i$ there is an $m_k$ such that $m_k \sqsubset m_i$ and
$x_{g(m_k,m_i)} m_i = \lcm(m_k,m_i)$ divides $\lcm(m_j,m_i)$ for some
index $g(m_k,m_i)$. Thus a shellable monomial ideal has linear
quotients and then also initially linear syzygies.

The families of monomial ideals we are particularly interested in are
the stable ideals, and the squarefree matroidal ideals, which are
ideals where the supports of the minimal generators form the bases of a
matroid.


Since we will extensively use algebraic Morse theory, we will below
briefly review our terminology. For details on algebraic
Morse theory, see \cite{joellenbeck_welker:morse},
\cite{jonsson:graphs}, \cite{kozlov:morse} and \cite{skoldberg:morse}.
By a \emph{based complex} of $R$-modules we mean a chain complex
\( \cpx{K} \) of  \( R \)-modules together with a direct sum
decomposition  \( K_n = \bigoplus_{\alpha \in I_n} K_{\alpha} \) where
 \( \{I_n\} \) is a family of  mutually disjoint index sets.
For $f: \bigoplus_n K_n \rightarrow \bigoplus_n K_n$ a graded map, we write
$f_{\beta,\alpha}$ for the component of $f$ going from
$K_{\alpha}$ to $K_{\beta}$, and given a based complex \(\cpx{K}\) we
construct  a digraph \( \digraph{\cpx{K}} \) with vertex set \( V =
\bigcup_n I_n \) and with a directed  edge
\( \alpha \rightarrow \beta \) whenever the component
$d_{\beta,\alpha}$ is non-zero.

A \emph{partial matching  on a digraph $D=(V,E)$} is a subset
$A$ of the edges $E$ such that no vertex is incident  to
 more than one edge in $A$. In this situation we define
the new digraph $D^A=(V,E^A)$ to be the digraph obtained from $D$ by
reversing the direction of each arrow in $A$. Given the matching
$A$, we define the sets $A^{+}$, $A^{-}$ and $A^{0}$ by letting
$A^{+}$ be the set of vertices that are targets of a reversed
arrow from $A$; $A^{-}$ be the set of vertices that are sources
of a reversed arrow from $A$; and $A^{0}$ to be the vertices that
are not incident to an arrow from $A$.
We call a partial matching \(A\) supported on the digraph
\(\digraph{\cpx{K}} \) a \emph{Morse matching} if, for  each edge
\(\alpha \rightarrow \beta \) in \(A\), the corresponding component
\(d_{\beta,\alpha} \) is an isomorphism, and furthermore there is a well
founded partial order $\prec$ on each $I_n$ such that  $\gamma \prec
\alpha$ whenever there is a path $\alpha^{(n)} \rightarrow \beta
\rightarrow \gamma^{(n)}$ in $\morsegraph{\cpx{K}}{A}$.

\section{The minimal resolution}

In this section we start by observing that a finitely generated
$S$-module $M$ has a free resolution given by a two-sided Koszul
complex $\cpx{G}$. The modules in this resolution are not even finitely
generated, so it is far from being minimal. In this resolution we
see that we can find a matching that gives us a projection that allows
us to find the minimal resolution $\cpx{F}$ of $M$ as a direct summand
of $\cpx{G}$ in the case when $M$ has initially linear syzygies.
We then give a description of the differential in terms
of reductions following J\"ollenbeck and
Welker~\cite{joellenbeck_welker:morse}, and then
show that in some cases the differential is of Eliahou--Kervaire
type. We round off by showing that modules with initially linear
syzygies are componentwise linear.

Let $M$ be a finitely generated $S$-module,
let $V$ be the $k$-vector space with basis $e_1,\dots, e_n$ and
let $\cpx{F}$ be the chain complex with modules
$F_p = S \otimes \alt^p V \otimes M$.
For an element $e_{i_1} \wedge \cdots \wedge e_{i_d}$ of $\alt^d V $
with $I = \{i_1, \dots ,i_d\} \subseteq [n]$ and
$i_1 < \dots < i_d$, we will write
$e_{I}$, and we will also write $e_Im$ for the element
$1 \otimes e_I \otimes m$; as $m$ ranges over a $k$-basis of $M$,
 these elements obviously form an $S$-basis for $F_n$.
The differential
$d_n :   S \otimes \alt^n V \otimes M \longrightarrow
 S \otimes \alt^{n-1} V \otimes M$ is defined on the basis elements by
\begin{displaymath}
d(e_I m) =
\sum_{i \in I}  \sgn{i}{I}
(x_{i} \, e_{I \smallsetminus i} m -
  e_{I\smallsetminus i }  x_{i}m),
\end{displaymath}
where the sign $\sgn{i}{I}$ is defined by
\[
\sgn{i}{I} = (-1)^{\card{\{j \mid j \in I, j < i\}}}.
\]
\begin{Lem} \label{lemma:gen_koszul}
The complex $\cpx{F}$ is a free resolution of $M$.
\end{Lem}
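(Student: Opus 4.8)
The plan is to recognise $\cpx{F}$ as what one gets by applying $-\otimes_S M$ to the two-sided (enveloping-algebra) Koszul complex of $S$, and then to reduce the lemma to the acyclicity of that Koszul complex, which is elementary. Concretely, write $S^e = S\otimes_k S$ and let $\mathcal{K}_\bullet$ be the complex of $S^e$-modules with $\mathcal{K}_p = S\otimes_k \alt^p V\otimes_k S$ and
\[
\partial(s\otimes e_I\otimes t) = \sum_{i\in I}\sgn{i}{I}\bigl(s x_i\otimes e_{I\smallsetminus i}\otimes t - s\otimes e_{I\smallsetminus i}\otimes x_i t\bigr),
\]
augmented to $S$ by the multiplication map $S\otimes_k S\to S$. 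Viewing $M$ as a left $S$-module and using $(S\otimes_k\alt^p V\otimes_k S)\otimes_S M\cong S\otimes_k\alt^p V\otimes_k M$, applying $-\otimes_S M$ to this augmented complex returns exactly $\cpx{F}$ with the stated differential, together with the map $\cpx{F}\to M$ induced by $s\otimes m\mapsto sm$. So it suffices to show that $\mathcal{K}_\bullet$ is a resolution of $S$ by right $S$-modules that are flat, and that each $F_p$ is a free left $S$-module.

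The latter is immediate: $F_p = S\otimes_k(\alt^p V\otimes_k M)$ has $S$-basis given by the elements $e_I m$ with $I$ a $p$-subset of $[n]$ and $m$ ranging over a $k$-basis of $M$. Likewise $\mathcal{K}_p = S^e\otimes_k\alt^p V$ is a free $S^e$-module, hence a free — in particular flat — right $S$-module, since $S^e$ is free as a right $S$-module on the monomial basis of the left-hand tensor factor.

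It remains to prove that $\mathcal{K}_\bullet\to S\to 0$ is exact. Identify $S^e$ with the polynomial ring $k[x_1,\dots,x_n,y_1,\dots,y_n]$, with $y_i$ corresponding to $1\otimes x_i$, so that the augmentation becomes the quotient map with kernel $(x_1-y_1,\dots,x_n-y_n)$; under this identification $\mathcal{K}_\bullet$ is precisely the Koszul complex on the sequence $x_1-y_1,\dots,x_n-y_n$. That sequence is regular, because for each $j$ the ring $S^e/(x_1-y_1,\dots,x_j-y_j)\cong k[x_1,\dots,x_n,y_{j+1},\dots,y_n]$ is a domain on which $x_{j+1}-y_{j+1}$ is a nonzerodivisor; hence the Koszul complex is acyclic and resolves $S^e/(x_1-y_1,\dots,x_n-y_n)\cong S$. (Alternatively, exactness follows from the Künneth theorem over the field $k$, writing $\mathcal{K}_\bullet$ as the $k$-tensor product of the $n$ exact two-term complexes $0\to k[x_i]\otimes_k k[x_i]\xrightarrow{\,x_i\otimes 1-1\otimes x_i\,}k[x_i]\otimes_k k[x_i]\to 0$.)

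Putting this together: $\mathcal{K}_\bullet$ is a flat right-$S$-module resolution of $S$, so $H_p(\cpx{F}) = H_p(\mathcal{K}_\bullet\otimes_S M)\cong\mathrm{Tor}^S_p(S,M)$, which is $M$ for $p=0$ and $0$ for $p>0$ since $S$ is free over itself; and the $F_p$ are free $S$-modules, so $\cpx{F}$ is a free resolution of $M$. The only delicate point is the bookkeeping of the left and right $S$-actions needed to identify $\cpx{F}$ with $\mathcal{K}_\bullet\otimes_S M$ and to see that the homology computed is the correct $\mathrm{Tor}$; the acyclicity of $\mathcal{K}_\bullet$ and everything else is routine.
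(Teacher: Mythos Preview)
Your proof is correct and takes a genuinely different route from the paper. You recognise $\cpx{F}$ as $\mathcal{K}_\bullet\otimes_S M$, where $\mathcal{K}_\bullet$ is the Koszul bimodule resolution of $S$, and reduce the lemma to the classical fact that the Koszul complex on the regular sequence $x_1-y_1,\dots,x_n-y_n$ in $S^e\cong k[x_1,\dots,x_n,y_1,\dots,y_n]$ is acyclic; the conclusion then falls out of a $\mathrm{Tor}$ computation together with the obvious freeness of $F_p$ as a left $S$-module. The paper instead treats $\cpx{F}$ directly as a based complex of $k$-vector spaces and exhibits an explicit Morse matching, pairing $x^{\alpha}e_I m$ with $x^{\alpha}x_i e_{I\smallsetminus i}m$ for $i=\min(\supp\alpha\cup I)$, and then appeals to the Morse-theoretic acyclicity criterion. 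Your argument is conceptually cleaner and places the lemma within standard homological algebra; the paper's argument is more combinatorial but stays entirely inside the Morse-theory framework used throughout, and the matching built here is explicitly reused as a template in the proofs of the later lemmas (including Lemma~2 and the construction of the contracting homotopy in Section~4).
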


\begin{proof}
It is obvious that $\homology_{0} (\cpx{F})  \simeq M$, so we just have to
prove that $\homology_{i} (\cpx{F}) = 0$ for $i \geq 1$.

For a $k$-vector space basis $B$ of $M$ we consider $\cpx{F}$ as a
based complex of $k$-vector spaces via the natural decomposition
\[
S \otimes \alt^{\bullet} V \otimes M \simeq
\bigoplus_{\alpha \in \nat^n, I \subseteq [n], m \in B}
k \cdot x^{\alpha} \, e_I m.
\]
For each $i$ we now define the following subset of the vertices in the
digraph $\digraph{\cpx{F}}$:
\[
V_i = \{x^{\alpha} \, e_I m \mid \deg x^{\alpha} + \card I = i\}.
\]
Now, construct a partial matching $E_{i}$ on the subgraph
$\digraph{\cpx{F}}|_{V_i}$ consisting of the edges
\[
E_i =
\{
x^{\alpha} \, e_I  m
\rightarrow
x^{\alpha}x_{i} \, e_{I\smallsetminus i} m
\mid
i = \min(\supp \alpha \cup I)
\}.
\]
It should be clear that if $\alpha \rightarrow \beta \in E_i$ for
$\alpha, \beta \in V_i$ then all $\gamma \in V_i$,
$\gamma \neq\beta$
with an edge
$\alpha \rightarrow \gamma$ are unmatched, so $E_i$ is a Morse
matching on $\digraph{\cpx{F}}|_{V_i}$, and since for all edges
$\alpha \rightarrow \beta$ with $\alpha \in V_i$ and $\beta \in V_j$ we
have $j \leq i$, we get from  \cite[Lemma 7]{skoldberg:morse} that
$\bigcup_{i} E_i$ is a Morse matching on $\digraph{\cpx{F}}$.
The claim of the lemma now follows from
\cite[Theorem 1]{skoldberg:morse} since there are no $E$-critical
vertices in degree 1 and higher.
\end{proof}

Now, we will construct a matching on $\cpx{G}$, viewed as a based
complex of $S$-modules, that will give us a splitting homotopy $\phi$
of $\cpx{G}$. Using the homotopy $\phi$ we can then describe the
minimal resolution of $M$.
Given a presentation~(\ref{eq:in_lin_syz}), we define for
a basis element $g_i$ of the free module $\bigoplus_j S\cdot g_j$,
its critical and non-critical indices by
\[
\crit(g_i) =
\{ j \mid x_j \, g_i \in\initial_{\prec}(\kernel \canonicalsurj) \},
\quad
\ncrit(g_i) = [n] \setminus \crit(g_i).
\]
Suppose $M$ has initially linear syzygies with a presentation as in
(\ref{eq:in_lin_syz}), we then have a $k$-basis for $M$,
(abusing notation by writing $x^{\alpha} g_i$ for
$\canonicalsurj(x^{\alpha} \, g_i)$),
\[
\{ x^{\alpha} \, g_i \mid \supp \alpha \subseteq \ncrit
g_i \}.
\]
We consider the resolution $\cpx{F}$ as a based complex of $S$-modules
via the decomposition
\[
F_n \simeq \bigoplus_{\substack{I, \alpha, i\\\card{I} =n,  \supp \alpha \subseteq  \ncrit g_i}}
S \cdot e_I x^{\alpha} g_i.
\]
For each term $m$ in $\bigoplus_{i} S \cdot g_i$, we define a subset of
the vertices of $\digraph{\cpx{F}}$
\[
V_m =  \{S \cdot e_I x^{\alpha} g_j \mid
x_I  x^{\alpha} g_j = m \},
\]
and for each such $m$, we will now define a partial matching $E_m$ on
the digraph $\digraph{\cpx{F}}|_{V_{m}}$ by
\[
E_m = \{S \cdot e_I  x^{\alpha} g_j
\rightarrow
 S \cdot e_{I\smallsetminus i} x_i x^{\alpha} g_j
\mid
i = \max ((I \cup \supp \alpha) \cap \ncrit g_j)
\}.
\]

\begin{Lem}
  The set $E = \bigcup_m E_m$ is a Morse matching on the digraph
  $\digraph{\cpx{F}}$.
  The set of unmatched vertices consists of all $S \cdot e_I g_j$ where
  $I \subseteq \crit g_j$.
\end{Lem}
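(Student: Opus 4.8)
The plan is to verify the three defining conditions of a Morse matching for $E=\bigcup_m E_m$ by reducing each to a statement about a single fibre $V_m$, reading off the critical (unmatched) vertices along the way. Fix a term $m$ that occurs as $x_I x^{\alpha}g_j$ for some basis element $e_I x^{\alpha}g_j$, and set $L=(I\cup\supp\alpha)\cap\ncrit g_j$. First I would check that $L$ depends only on $m$, not on the chosen basis element, and that $e_I x^{\alpha}g_j\mapsto I\cap\ncrit g_j$ is a bijection from $V_m$ onto the set of all subsets of $L$ (the complementary part $I\cap\crit g_j$ being common to every element of $V_m$). Under this bijection the defining edge of $E_m$ out of a vertex corresponding to $J\subseteq L$ exists exactly when $\max L\in J$, and then it runs from $J$ to $J\setminus\max L$; thus $E_m$ is transported to the well-known acyclic matching on the face poset of the simplex on $L$ that pairs $K$ with $K\cup\{\max L\}$. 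This immediately shows each $E_m$, hence $E$ (the $V_m$ being pairwise disjoint), is a partial matching, and that the only $E_m$-unmatched vertex is the one with $J=\emptyset$, which occurs precisely when $L=\emptyset$, i.e. when $m=x_I g_j$ with $I\subseteq\crit g_j$ --- the asserted set of critical vertices.

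For the isomorphism condition, take a matching edge $e_I x^{\alpha}g_j\to e_{I\setminus i}x_i x^{\alpha}g_j$, so that $i=\max L\in\ncrit g_j$ and $i\in I$. Expanding
$d(e_I x^{\alpha}g_j)=\sum_{l\in I}\sgn{l}{I}\bigl(x_l\,e_{I\setminus l}x^{\alpha}g_j-e_{I\setminus l}x_l x^{\alpha}g_j\bigr)$
and rewriting each $x_l x^{\alpha}g_j$ in the given $k$-basis of $M$ by its normal form, one sees that the only summand whose exterior part is $e_{I\setminus i}$ and whose $M$-part is the basis element $x_i x^{\alpha}g_j$ is $-\sgn{i}{I}\,e_{I\setminus i}x_i x^{\alpha}g_j$ itself: the terms $x_l\,e_{I\setminus l}x^{\alpha}g_j$ never carry the exponent of $x_i x^{\alpha}$, and for $l\in\crit g_j$ passing to $\nf(x_l x^{\alpha}g_j)$ alters neither the exterior part nor, since $i\notin\crit g_j$, can it produce $g_j$. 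Hence $d_{\beta,\alpha}=-\sgn{i}{I}$ is a unit, i.e. an isomorphism.

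The part I expect to require the most care is acyclicity. The key invariant is the total monomial $t(e_I x^{\alpha}g_j)=x_I x^{\alpha}g_j$ --- that is, the index $m$ of the fibre containing the vertex. Matching-type edges preserve $t$, and I claim every other edge of $\digraph{\cpx{F}}$ strictly decreases the pair $(\deg t,\,t)$ ordered by degree and then by $\prec$, a well-founded order with finite fibres: a summand $x_l\,e_{I\setminus l}x^{\alpha}g_j$ of $d$ lowers $\deg t$ by one, while for $l\in\crit g_j$ every term of $\nf(x_l x^{\alpha}g_j)$ is $\prec$-smaller than $x_l x^{\alpha}g_j$ since $x_l g_j\in\initial_{\prec}(\kernel\canonicalsurj)$, and multiplying by $x_{I\setminus l}$ preserves strictness --- so such reduced terms never re-enter $V_m$. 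Consequently any directed cycle of $\morsegraph{\cpx{F}}{E}$ lies inside a single $(\digraph{\cpx{F}}|_{V_m})^{E_m}$, which is the face poset of a simplex carrying its standard acyclic matching and hence has no directed cycle; and the order $(\deg t,\,t)$, refined inside each finite fibre by height in the (now acyclic, finite) graph $\morsegraph{\cpx{F}}{E}|_{V_m}$, supplies the required well-founded order on each $I_n$. The delicate step is exactly the claim that edges leaving a fibre strictly decrease the well-order, i.e. that normal-form reductions cannot return to $V_m$, which the strict inequality for elements of the initial module guarantees; alternatively one invokes the patching lemma \cite[Lemma 7]{skoldberg:morse} with the $V_m$ indexed by $(\deg t,\,t)$.
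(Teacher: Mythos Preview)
Your argument follows the same strategy as the paper's: partition the vertices into the fibres $V_m$, show each $E_m$ is a Morse matching on its fibre (you do this via the explicit bijection with subsets of $L$ and the standard cone matching on a simplex, and you also verify the isomorphism condition and the description of the unmatched vertices, which the paper leaves implicit; the paper argues more tersely and cites \cite[Lemma~1]{skoldberg:morse}), and then patch the fibres using the term order $\prec$ exactly as the paper does via \cite[Lemma~7]{skoldberg:morse}. One small slip to fix: it is not true that \emph{every} non-matching edge strictly decreases $t$ --- for $l\in I\cap\ncrit g_j$ with $l\neq\max L$ the edge $e_Ix^{\alpha}g_j\to e_{I\setminus l}x_lx^{\alpha}g_j$ is a non-matching edge that stays in $V_m$ --- but since you then confine cycles to a single fibre and handle that case with the simplex matching anyway, the conclusion stands.
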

\begin{proof}
  It is clear that $E_m$ is a partial matching on $\digraph{\cpx{F}}|_{V_m}$
  and along the same lines as in the proof of Lemma~\ref{lemma:gen_koszul},
  if $\alpha \rightarrow \beta \in E_m$ for $\alpha, \beta \in V_m$
  then all $\gamma\in V_m$ such that $\gamma \neq \beta$ and
  with an edge $\alpha \rightarrow \gamma$ are unmatched. Thus,
  there is no oriented cycle in the
  finite digraph $\morsegraph{\cpx{F}}{E}|_{V_m}$,
  and  by \cite[Lemma 1]{skoldberg:morse}, $E_m$ is a Morse matching
  on  $\digraph{\cpx{F}}|_{V_m}$. Whenever $\alpha \rightarrow \beta \in E$
  with $\alpha \in V_{m_1}$ and $\beta \in V_{m_2}$ we have
  $m_2 \preceq m_1$, so by  \cite[Lemma 7]{skoldberg:morse}, $\bigcup_m E_m$
  is a Morse matching on the full graph $\digraph{\cpx{F}}$
\end{proof}

With the above result, we can now define an $S$-linear splitting
homotopy $\phi$ on the resolution $\cpx{F}$ that allows us to
construct a smaller resolution $\cpx{G}$.
We will first give an explicit recursive definition of $\phi$.
In the definition we use the following notation due to Knuth:
When $P$ is some proposition, then $[P]=1$ if $P$ is true, and
$[P]=0$ if $P$ is false.
Let us start by defining the two helper functions $\iota$ and $\phi_0$ by
\begin{align*} \label{eq:iota_def}
  \iota(\alpha) &= \max (\supp \alpha) \\
  \phi_0(e_{I}x^{\alpha}g_j) &=
  [\iota(\alpha) > \max (I\cap\ncrit g_i)]
  \sgn{\iota(\alpha)}{I\cup\iota(\alpha)} \cdot
  e_{I\cup \iota(\alpha)} \frac{x^{\alpha}}{x_{\iota(\alpha)}}g_j
\end{align*}
then
\begin{equation*}
    \phi(e_{I}x^{\alpha}g_j) =
      \phi_0(e_{I}x^{\alpha}g_j)
      - \phi
      \left(
        d \phi_0 (e_{I}x^{\alpha}g_j) - e_{I}x^{\alpha}g_j
     \right).
\end{equation*}
Let $\pi$ be the projection
\[
\pi: \bigoplus_{I, \alpha, j} S \cdot e_I x^{\alpha} g_j \longrightarrow
\bigoplus_{\substack{I,j\\I \subseteq\crit(g_j)}} S \cdot e_I g_j.
\]


Now we can define a complex $\cpx{G}$ by letting
\[G_n = \bigoplus_{\substack{I, j\\I \subseteq\crit(g_j), \card{I} = n}} S \cdot e_I g_j\]
and defining the differential by
$d_G = \pi(d_F - d_F \phi d_F)$; we can then formulate the following
result that generalises work of Batzies and
Welker~\cite{batzies_welker:morse_cellular} and Herzog and
Takayama~\cite{herzog_takayama:mapping_cones}.
\begin{Thm}\label{thm:min_res}
Let $M$ be a module with initially linear syzygies, then
$\cpx{G}$ is the minimal free resolution of $M$.
\end{Thm}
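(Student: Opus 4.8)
The plan is to apply the standard machinery of algebraic Morse theory to the Morse matching $E$ of the previous lemma, and then verify that the resulting Morse complex is in fact minimal. By \cite[Theorem 1]{skoldberg:morse}, once $E$ is a Morse matching on $\digraph{\cpx{F}}$, the complex built on the $E$-critical vertices—i.e. the basis elements $e_I g_j$ with $I \subseteq \crit g_j$, which by the preceding lemma are exactly the unmatched vertices—carries a differential making it a resolution of $M$ quasi-isomorphic to $\cpx{F}$; since $\cpx{F}$ resolves $M$ by Lemma~\ref{lemma:gen_koszul}, so does $\cpx{G}$. The first step is therefore to confirm that the complex $\cpx{G}$ as defined above, with differential $d_G = \pi(d_F - d_F\phi d_F)$, really is the Morse complex attached to $E$; this amounts to checking that the recursively defined map $\phi$ is the splitting homotopy associated with $E$, i.e. that $\phi_0$ picks out, for each vertex, the unique $E$-edge out of it (reversed), and that the recursion $\phi = \phi_0 - \phi(d\phi_0 - \id)$ terminates because of the well-founded order in the definition of a Morse matching. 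Termination and the identification with the Morse homotopy are essentially bookkeeping once one observes that $\phi_0(e_I x^\alpha g_j)$ is nonzero precisely when $e_I x^\alpha g_j \in E^-$, matched to $e_{I\cup\iota(\alpha)}\frac{x^\alpha}{x_{\iota(\alpha)}}g_j$, and that each application of $d\phi_0 - \id$ strictly decreases the monomial $x_I x^\alpha g_j$ in the well-founded order, or keeps it fixed while moving to $E^0$.

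With $\cpx{G}$ identified as a free resolution of $M$, it remains to prove minimality, i.e. that the differential $d_G$ has entries in $\mathfrak{m}$. Here I would argue as follows. An entry of $d_G$ from $e_I g_j$ to $e_{I'} g_{j'}$ is a sum of contributions, each obtained by travelling a directed path in $\morsegraph{\cpx{F}}{E}$ from $e_I g_j$ to $e_{I'}g_{j'}$, alternating $d_F$-steps (which lower $\card{I}$ by one, multiplying by some $x_i$ or applying $x_i \cdot$ on the module side) with reversed $E$-steps (which raise $\card{I}$ by one). The key observation is that the reversed $E$-edges $e_I x^\alpha g_j \to e_{I\smallsetminus i}x_i x^\alpha g_j$ carry coefficient a unit, but \emph{every} $d_F$-step contributes a factor lying in $\mathfrak m$: in the formula for $d$, both summands $x_i e_{I\smallsetminus i}m$ and $e_{I\smallsetminus i}x_i m$ have a factor of $x_i$, or, on the module side, land in $\bigoplus_i \mathfrak m\cdot g_i$ because $\kernel\canonicalsurj \subseteq \bigoplus_i \mathfrak m\cdot g_i$ forces the normal-form reduction to stay inside $\mathfrak m$. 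Since along any path from a critical vertex in homological degree $n$ to one in degree $n-1$ there is at least one net $d_F$-step (in fact exactly one more $d_F$-step than reversed $E$-step), each path contributes an element of $\mathfrak m$, hence so does $d_G$. A clean way to package this is to introduce the standard $\mathbb{Z}$-grading (or the fine $\mathbb{Z}^n$-grading when $M$ is monomial), note that $d_F$ is homogeneous of positive degree while the $\phi$-steps are homogeneous of non-positive degree, and check that the composite $\pi(d_F - d_F\phi d_F)$ is homogeneous of strictly positive degree, which for finitely generated graded modules is equivalent to minimality.

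The main obstacle I anticipate is the minimality verification—specifically, ruling out cancellation that could produce a unit entry. Individually each path contributes something in $\mathfrak m$, so no cancellation among the path-contributions can ever yield a unit; this makes the grading argument decisive and the obstacle more apparent than real, provided one sets up the degree bookkeeping carefully. The genuinely delicate point is instead to make sure that $\cpx{G}$ is \emph{finitely generated} in each homological degree and that the recursion defining $\phi$ is locally finite, so that $d_G$ is well defined; this follows because for a fixed monomial $m = x_I x^\alpha g_j$ the set $V_m$ is finite, the matching $E_m$ lives inside it, and $\phi$ applied to a basis element only ever involves basis elements with the same or smaller value of $m$, of which there are finitely many. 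Once these finiteness points are in place, combining the Morse-theoretic identification of $\cpx{G}$ with the degree argument for minimality completes the proof.
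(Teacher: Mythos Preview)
Your overall strategy matches the paper's: apply the Morse-theory machinery to the matching $E$ on $\cpx{F}$. The paper's own proof is a single sentence invoking \cite[Theorem~2]{skoldberg:morse}, so all of your bookkeeping about $\phi$, $\phi_0$, and finiteness is correct elaboration of what that citation encodes.

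Where your argument goes wrong is in the minimality step. The claim that ``every $d_F$-step contributes a factor lying in $\mathfrak m$'' is false. Remember that we are viewing $\cpx{F}$ as a based complex of $S$-modules: an edge from the summand $S\cdot e_I x^{\alpha}g_j$ to $S\cdot e_J x^{\beta}g_k$ is an $S$-linear map, and its ``coefficient'' is the scalar by which the generator maps. For the right half of the differential, $e_I x^{\alpha}g_j \mapsto -\sgn{i}{I}\, e_{I\smallsetminus i}(x_i x^{\alpha}g_j)$, one rewrites $x_i x^{\alpha}g_j$ in the $k$-basis of $M$ and obtains edges to the summands $e_{I\smallsetminus i}x^{\beta}g_k$ with \emph{scalar} coefficients, not elements of $\mathfrak m$. (In particular, when $i\in\ncrit g_j$ this edge has coefficient $\pm 1$; these unit edges are exactly what the matching $E$ is designed to kill, but only one of them per vertex is matched.) So counting ``one more $d_F$-step than $E$-step'' does not by itself produce a factor in $\mathfrak m$. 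Your fallback grading sentence is also miscast: $d_F$, $\phi$, and hence $d_G$ are all homogeneous of internal degree $0$; there is no sense in which $d_F$ has ``positive degree''.

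The repair is to use the hypothesis $\ker\eta\subseteq\bigoplus_i\mathfrak m\, g_i$ more carefully. It forces every Gr\"obner basis element to have the form $x_a g_i - \sum_l c_l m_l g_l$ with each $m_l\in\mathfrak m$, so along any reduction the monomial degree never drops and hence $\deg g_k \le \deg g_j$. Track the shift $s(e_I x^{\alpha}g_j)=|I|+|\alpha|+\deg g_j$ along a path in $\morsegraph{\cpx{F}}{E}$: reversed $E$-steps and ``right'' $d_F$-steps preserve $s$, while ``left'' $d_F$-steps decrease it by $1$. A path from a critical $e_I g_j$ in homological degree $n$ to a critical $e_{I'}g_{j'}$ in degree $n-1$ has net change $s=-1+\deg g_{j'}-\deg g_j\le -1$, so it contains at least one ``left'' step, contributing an honest $x_i\in\mathfrak m$. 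Equivalently, the $(e_{I'}g_{j'})$-coefficient of $d_G(e_I g_j)$ is homogeneous of degree $1+\deg g_j-\deg g_{j'}\ge 1$. That is the minimality argument you want.
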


\begin{proof}
  Follows from applying \cite[Theorem 2]{skoldberg:morse} to the
  matching $E$ and the resolution $\cpx{F}$.
\end{proof}

From Theorem~\ref{thm:min_res}, we can now immediately deduce the
following two corollaries giving the projective dimension and
Castelnuovo--Mumford regularity of a module with initially linear
syzygies.
\begin{Cor}
  If $M$ is minimally generated by $m_1,\dots, m_r$, and $M$ has
  initially linear syzygies then
  $\mathrm{p. dim} M = \max_{j} \card{\crit e_j}$
\end{Cor}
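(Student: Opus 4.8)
The plan is to read the projective dimension straight off the minimal free resolution $\cpx{G}$ furnished by Theorem~\ref{thm:min_res}. Recall that for a finitely generated graded $S$-module the projective dimension equals the length of its minimal free resolution, so once $\cpx{G}$ is known to be minimal we have $\mathrm{p.\,dim}\,M = \max\{\,n : G_n \neq 0\,\}$. Everything then reduces to deciding for which $n$ the module $G_n$ is nonzero.

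First I would identify the minimal generating set. Given a presentation~(\ref{eq:in_lin_syz}) with initially linear syzygies, the hypothesis $\kernel\canonicalsurj \subseteq \bigoplus_i \mathfrak{m}\cdot g_i$ forces that presentation to be minimal, so after renaming we may take $m_1,\dots,m_r$ to be the images $\canonicalsurj(g_1),\dots,\canonicalsurj(g_n)$, with $r=n$; in particular the sets $\crit(g_j)$ (written $\crit e_j$ in the statement) are exactly the data attached to the minimal generators.

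Next, by the very definition of $\cpx{G}$ we have $G_n = \bigoplus_{I \subseteq \crit(g_j),\ \card{I}=n} S \cdot e_I g_j$, a direct sum of free rank-one modules. Hence $G_n \neq 0$ if and only if there exist an index $j$ and an $n$-element subset $I \subseteq \crit(g_j)$, which happens if and only if $\card{\crit(g_j)} \geq n$ for some $j$. Therefore $\max\{\,n : G_n \neq 0\,\} = \max_j \card{\crit(g_j)}$, and combining this with Theorem~\ref{thm:min_res} and the first step yields $\mathrm{p.\,dim}\,M = \max_j \card{\crit e_j}$.

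There is no real obstacle here — the corollary is essentially a bookkeeping consequence of Theorem~\ref{thm:min_res}. The one point worth a remark is that a module with initially linear syzygies may admit several presentations, generating sets and term orders; but since Theorem~\ref{thm:min_res} already asserts that $\cpx{G}$ is \emph{the} minimal resolution, its Betti numbers $\beta_n = \#\{(I,j) : I \subseteq \crit(g_j),\ \card{I}=n\}$ are invariants of $M$, and so in particular is $\max_j \card{\crit(g_j)}$; no separate invariance argument is needed.
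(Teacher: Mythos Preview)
Your argument is correct and is precisely the intended one: the paper does not give a separate proof of this corollary but simply states that it follows immediately from Theorem~\ref{thm:min_res}, which is exactly what you have spelled out by reading the length of the minimal resolution $\cpx{G}$ off its explicit description.
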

\begin{Cor}
  If $M$ is minimally generated by $m_1,\dots, m_r$, and $M$ has
  initially linear syzygies then
  $\mathrm{reg} M = \max_{i,j} (\deg m_i - \deg m_j)$
\end{Cor}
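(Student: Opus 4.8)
The plan is to read the graded Betti numbers of $M$ straight off the explicit minimal free resolution $\cpx{G}$ furnished by Theorem~\ref{thm:min_res}, and then apply the defining formula $\mathrm{reg}\, M = \max\set{\, j - i : \beta_{i,j}(M) \neq 0\,}$.

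First I would record the internal degrees of the $S$-basis of $\cpx{G}$. Homogeneity of the differential of the two-sided Koszul complex $\cpx{F}$ forces $\deg e_i = 1$, and $\deg g_j = \deg m_j$ because $\canonicalsurj$ is graded and sends $g_j$ to the minimal generator $m_j$; hence $e_I g_j$ lies in homological degree $\card I$ and internal degree $\card I + \deg m_j$, and by the definition of $\cpx{G}$ it occurs as a basis element exactly when $I \subseteq \crit(g_j)$. Since $\cpx{G}$ is minimal, $\beta_{n,d}(M)$ is simply the number of degree-$d$ basis elements of $G_n$, that is, the number of pairs $(I,j)$ with $I \subseteq \crit(g_j)$, $\card I = n$ and $n + \deg m_j = d$. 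In particular $\beta_{n,d}(M) \neq 0$ precisely when $d = n + \deg m_j$ for some $j$ with $\card{\crit(g_j)} \geq n$.

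It then remains to maximise $d - n = \deg m_j$ over such pairs. Since $I = \emptyset \subseteq \crit(g_j)$ is always admissible, every generating degree $\deg m_j$ is attained already at $n = 0$, so the maximum is $\max_j \deg m_j$; under the normalisation placing a generator of least degree in degree $0$ --- which does not alter the displayed right-hand side --- this is $\max_{i,j}(\deg m_i - \deg m_j)$, as asserted. The companion statement $\mathrm{p. dim}\, M = \max_j \card{\crit(g_j)}$ drops out of the same analysis, since $G_n \neq 0$ if and only if $\card{\crit(g_j)} \geq n$ for some $j$.

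I do not anticipate a genuine obstacle: once Theorem~\ref{thm:min_res} is available the statement is bookkeeping. The one point that wants a moment's care is confirming that $e_I g_j$ carries internal degree exactly $\card I + \deg m_j$ --- that the exterior generators $e_i$ have degree $1$, and that the standing identification $g_j \leftrightarrow m_j$ respects degrees.
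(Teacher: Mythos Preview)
Your approach is exactly what the paper intends: the corollary is stated without proof, preceded only by ``we can now immediately deduce'', so reading the graded Betti numbers off the basis $\{e_I g_j : I\subseteq\crit(g_j)\}$ of $\cpx{G}$ and using $\mathrm{reg}\,M=\max\{j-i:\beta_{i,j}(M)\neq 0\}$ is the whole argument. Your computation that each basis element $e_I g_j$ sits in bidegree $(\card I,\,\card I+\deg m_j)$, and hence that $\mathrm{reg}\,M=\max_j \deg m_j$, is correct.

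The one weak spot is your final ``normalisation'' step. Shifting the grading so that $\min_j\deg m_j=0$ does leave the right-hand side $\max_{i,j}(\deg m_i-\deg m_j)$ unchanged, but it shifts $\mathrm{reg}\,M$ by the same amount, so you cannot normalise one side and not the other. What you have actually shown is $\mathrm{reg}\,M=\max_j\deg m_j$, which under the usual conventions differs from the paper's displayed formula by $\min_j\deg m_j$; the paper's expression is really $\mathrm{reg}\,M-\min_j\deg m_j$. Rather than trying to finesse this with a shift, it is cleaner to state the value you derived and note that the corollary as printed appears to be off by the minimal generating degree (it is consistent with Corollary~\ref{cor:linres} only after that adjustment).
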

By specialising the last  corollary we get:
\begin{Cor} \label{cor:linres}
  A graded module generated by homogeneous elements of the same degree
  with initially linear syzygies has a linear resolution.
\end{Cor}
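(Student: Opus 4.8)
The plan is to read this off the preceding corollary, and then to note that it is equally transparent directly from Theorem~\ref{thm:min_res}. First I would record that if $M$ is generated by homogeneous elements all of one degree $d$, then every minimal homogeneous generating set $m_1,\dots,m_r$ of $M$ sits in degree $d$, so $\deg m_i - \deg m_j = 0$ for all $i,j$; the preceding corollary then yields $\mathrm{reg}\, M = \max_{i,j}(\deg m_i - \deg m_j) = 0$. For a module generated in the single degree $d$ the vanishing of this quantity is exactly the condition that its minimal free resolution be linear, so the argument is complete.

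For concreteness I would also unwind why the same conclusion falls straight out of the shape of $\cpx G$. The hypothesis $\kernel\canonicalsurj \subseteq \bigoplus_i \mathfrak m\cdot g_i$ built into the definition of initially linear syzygies says precisely that the presenting surjection $\canonicalsurj$ is minimal, hence $\{g_1,\dots,g_n\}$ is a minimal homogeneous generating set of $M$; since $M$ is generated in degree $d$ this pins $\deg g_j = d$ for every $j$. Homogeneity of the differential of $\cpx F$ forces each $e_i$ to carry internal degree $1$, so the free generator $e_I g_j$ of $G_n$ (with $I \subseteq \crit g_j$ and $|I| = n$) has internal degree $|I| + \deg g_j = n + d$. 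By Theorem~\ref{thm:min_res} the complex $\cpx G$ is the minimal free resolution of $M$, and we have just seen that $G_n$ is generated in the single degree $n+d$ for every $n$, which is the definition of a linear resolution.

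The only point deserving a moment's care is this first reduction: one must observe that the kernel condition in the definition of initially linear syzygies forces $\{g_i\}$ to be a minimal generating set, so that in the graded setting all the $g_i$ automatically lie in degree $d$. After that there is no genuine obstacle, since Theorem~\ref{thm:min_res} already supplies the minimal resolution with an explicit homogeneous basis, and the intermediate corollaries have already identified the Betti numbers.
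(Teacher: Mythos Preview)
Your proposal is correct and matches the paper's approach: the paper's entire proof is the single phrase ``By specialising the last corollary we get,'' and your first paragraph carries out exactly that specialisation. The additional unwinding via Theorem~\ref{thm:min_res} that you give in your second paragraph is a correct alternative (and arguably more transparent), but goes beyond what the paper actually records.
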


Another, non-recursive,  way of describing the differential in
$\cpx{F}$
is in terms of \emph{reductions}, as used by J\"ollenbeck and
Welker~\cite{joellenbeck_welker:morse}
in their description of the differential in the Anick resolution.
In the Morse digraph $\morsegraph{\cpx{G}}{E}$, we define an
\emph{elementary reduction path} to be a zig-zag path of length 2
\[
\alpha_0 \rightarrow \beta \rightarrow \alpha_1,
\]
where $\alpha_0, \alpha_1$ are in degree $i$, and $\beta$ is in degree
$i-1$ if $\alpha_0$ is in $E^{0}\cup E^{+}$, and in degree $i+1$ if
$\alpha \in E^{-}$. To such a path we assign the corresponding
\emph{elementary reduction} which is the map
\[
\rho_{\alpha_1,\alpha_0} =
\begin{cases}
  -d^{-1}_{\beta,\alpha_1} \circ d_{\beta,\alpha_0},
  & \quad \text{if } \alpha_0 \in E^{0} \cup E^{+}, \\
  -d_{\alpha_1,\beta} \circ d^{-1}_{\alpha_0,\beta},
  & \quad \text{if } \alpha_0 \in E^{-}.
\end{cases}
\]
The matching condition implies that there
is at most one elementary reduction path from $\alpha_0$ to $\alpha_1$.

In general, we let a \emph{reduction path} be a composition of zero or
more elementary reduction paths
\[
p =
\alpha_0 \rightarrow \beta_1 \rightarrow \alpha_1 \rightarrow \cdots
\rightarrow \beta_n \rightarrow \alpha_n, \qquad n \geq 0,
\]
and the corresponding reduction to be the composition
\[
\rho_{p} = \rho_{\alpha_n,\alpha_{n-1}}     \circ
           \rho_{\alpha_{n-1},\alpha_{n-2}} \circ
           \cdots                        \circ
           \rho_{\alpha_1,\alpha_0}.
\]
We denote the set of all reduction paths from $\alpha$
to $\beta$ by $\redpaths{\alpha}{\beta}$.
From~\cite[Lemma 5]{skoldberg:morse} we can now conclude that for a basis
element $\resbasis{I}{\alpha}$, we can write
\begin{equation}
  \label{eq:reduction_diff}
  d_{G}(\resbasis{I}{\alpha}) =
  \sum_{\substack{\resbasis{K}{\delta}\\K \subseteq \crit g_{\delta}}}
  \sum_{\substack{\bigbasis{J}{\beta}{\gamma}\\\resbasis{I}{\alpha}\rightarrow\bigbasis{J}{\beta}{\gamma}}} \sum_{p\in\redpaths{\bigbasis{J}{\beta}{\gamma}}{\resbasis{K}{\delta}}}
  \rho_{p}d_{\bigbasis{J}{\beta}{\gamma},\resbasis{I}{\alpha}}(\resbasis{I}{\alpha}).
\end{equation}

In our case we can see that we can divide the elementary reduction
paths originating in a vertex in $E^{-}$ into two types. We say that
an elementary reduction path is of type 1 when it is of the form
\[
  \bigbasis{I}{\alpha}{k} \rightarrow
  \altbigbasis{I\cup j}{\frac{x^{\alpha}}{x_j}}{g_k} \rightarrow
  \altbigbasis{(I\cup j)\smallsetminus i}{\frac{x^{\alpha}}{x_j}}{g_k},
 \]
where $j$ is the maximal element in
$(\supp \alpha \cup I) \cap \ncrit g_k$. The corresponding reduction
map is
\[
\rho(\bigbasis{I}{\alpha}{k}) =
  \sgn{j}{I \cup j}\sgn{i}{I\cup j} x_i
  \altbigbasis{(I\cup j)\smallsetminus i}{\frac{x^{\alpha}}{x_j}}{g_k}.
\]
An elementary reduction path is of type 2 if it is of the form
\[
  \bigbasis{I}{\alpha}{k} \rightarrow
  \altbigbasis{I\cup j}{\frac{x^{\alpha}}{x_j}}{g_k} \rightarrow
  \altbigbasis{(I\cup j)\smallsetminus i}{x^{\beta}}{g_l},
\]
where $x^{\beta}g_l$ appears with nonzero coefficient
$\lambda_{i,j,k,\alpha,\beta,l}$ in $\nf((x_ix^{\alpha}/x_j)g_k)$, and $j$ is the
maximal element in
$(\supp \alpha \cup I) \cap \ncrit g_k$. The corresponding reduction
map is
\[
\rho(\bigbasis{I}{\alpha}{k}) =
  -\sgn{j}{I \cup j}\sgn{i}{I\cup j}\lambda_{i,j,k,\alpha,\beta,l}
  \altbigbasis{(I\cup j)\smallsetminus i}{x^{\beta}}{g_l}.
\]

We will now see that in well-behaved cases, there is a more
explicit description of the differential.
\begin{Def}
  If $M$ has a presentation with initial linear syzygies such that
  for every generator $g$ we have that
  \[
  \crit(\nf(x_i g)) \subseteq \crit(g),
  \]
  we say that $M$ is crit-monotone. (We interpret
  $\crit(\sum_{j\in J}p_je_j)$ as $\bigcup_{j\in J}\crit(e_j)$ if there
  are no redundant terms in the sum.)
\end{Def}
It is easy to see that stable monomial ideals are crit-monotone, and
in~\cite{herzog_takayama:mapping_cones}, Herzog and Takayama prove
that matroidal ideals are crit-monotone, and they show that the
differential in the minimal resolution of crit-monotone monomial
ideals is of Eliahou--Kervaire type. Below, we will generalise their
result to general crit-monotone modules with initially linear
syzygies.

For a basis element $\altbasis{I}{g}$ with $I = \{i_1,\ldots, i_n\}$ we
define maps $d^{L}_{j}$, $d^{R}_{j}$, the left and right components of
the differential, for $1 \leq j \leq n$ by
\[
d^{L}_{j} (\altbasis{I}{g}) =
x_{i_{j}} \altbasis{I \smallsetminus i_j }{g}
\]
and
\[
d^{R}_{j} (\altbasis{I}{g}) =
\sum_k [I \smallsetminus i_j \subseteq \crit(g_k) ] \,
p_k \, \resbasis{I \smallsetminus i_j }{k},
\]
where $\nf(x_{i_{j}} g) = \sum_k p_k g_k$ for $p_k \in S$ and $g_k$ a
basis element, and we again make use of the Knuth notation.

\begin{Thm}
  Let $M$ be a crit-monotone module with initially linear syzygies,
  then the differential $d$ in the minimal resolution $\cpx{G}$ is given
  in degree $n$ by $d = \sum_{i=1}^{n}(-1)^{i-1}(d^{L}_{i} - d^{R}_{i})$
\end{Thm}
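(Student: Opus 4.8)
The strategy is to start from the reduction-path formula~(\ref{eq:reduction_diff}) and to show that, under the crit-monotone hypothesis, every reduction path contributing to $d_G(\resbasis{I}{\alpha})$ is extremely short. Concretely, I would first observe that for a critical basis element $\resbasis{I}{\alpha}$ with $I\subseteq\crit g_\alpha$ we necessarily have $\alpha = 0$ (since any $x_j$ in the support of $\alpha$ would lie in $\ncrit g_\alpha$, contradicting the description of the $k$-basis of $M$), so the critical vertices are exactly the $\resbasis{I}{g}$ with $I\subseteq\crit g$. The outer edge $\resbasis{I}{g}\rightarrow\bigbasis{J}{\beta}{\gamma}$ in~(\ref{eq:reduction_diff}) comes from a term of $d_F(e_Ig)$, i.e.\ from the Koszul-type differential $d(e_Ig)=\sum_{i\in I}\sgn{i}{I}(x_i e_{I\smallsetminus i}g - e_{I\smallsetminus i}x_ig)$. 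This produces two families of targets: the "left" targets $x_i\,\resbasis{I\smallsetminus i}{g}$ (already critical if $I\smallsetminus i\subseteq\crit g$, which holds automatically) and the "right" targets obtained by expanding $x_ig$ in the $k$-basis, i.e.\ $e_{I\smallsetminus i}\,\nf(x_ig)=\sum_k e_{I\smallsetminus i}p_kg_k$.

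The heart of the argument is then to trace reduction paths out of each of these targets. For a left target $x_i\,e_{I\smallsetminus i}g$, since $I\smallsetminus i\subseteq\crit g$ the vertex $e_{I\smallsetminus i}g$ is already critical, so no further reduction occurs and the contribution is exactly $\sgn{i}{I}x_i\,e_{I\smallsetminus i}g = (-1)^{i\text{-th position in }I}d^L(e_Ig)$-type term; collecting signs gives $\sum_{j}(-1)^{j-1}d^L_j$. For a right target $e_{I\smallsetminus i}x^{\beta}g_k$ with $x^{\beta}\neq 1$, I would use crit-monotonicity: $\crit(x^\beta g_k)\subseteq\crit g$ (iterating $\crit(\nf(x_\ell g))\subseteq\crit g$ along the monomial $x^\beta$), hence $I\smallsetminus i\subseteq\crit g\subseteq\crit g_k$ is not enough by itself — what I actually need is that the matching $E_{m}$ immediately reduces this vertex. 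Indeed $e_{I\smallsetminus i}x^\beta g_k$ lies in $V_m$ for $m = x_{I\smallsetminus i}x^\beta g_k$, and its maximal $\ncrit$-index $\max((\,(I\smallsetminus i)\cup\supp\beta)\cap\ncrit g_k)$ is attained somewhere in $\supp\beta$ (because $I\smallsetminus i\subseteq\crit g\subseteq\crit g_k$ means $(I\smallsetminus i)\cap\ncrit g_k=\emptyset$). So this vertex is in $E^+$ and its reduction path steps back up, contracts one variable of $x^\beta$, and the resulting vertex is again of the same shape with a shorter monomial; by induction on $\deg x^\beta$ the path terminates precisely when $x^\beta=1$, landing at the critical vertex $e_{I\smallsetminus i}g_k$ and picking up the coefficient $p_k$. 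Carefully bookkeeping the signs $\sgn{i}{I}$ and the signs from the type-1/type-2 reduction maps listed before the theorem yields $-\sum_j(-1)^{j-1}d^R_j$.

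The main obstacle I expect is the sign bookkeeping: one must verify that the sign $\sgn{i}{I}$ from the Koszul differential, multiplied by all the $\sgn{\cdot}{\cdot}$ factors accumulated along the reduction path (each type-1 or type-2 step contributes $\sgn{j}{I\cup j}\sgn{i}{I\cup j}$), telescopes to exactly $(-1)^{j-1}$ where $i_j$ is the removed index. A clean way to handle this is to note that the reduction path for a right target, read backwards, is exactly the "unmatching" of the monomial $x^\beta$ one variable at a time in decreasing order of index, which is the reverse of the matching $E_m$ used to build the minimal resolution in the first place; hence the composite sign is forced by the requirement that $d_G$ be well-defined, or can be checked by a short induction. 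A secondary point to check is that there are no "crossed" reduction paths — e.g.\ a left target that later gets reduced, or a right target whose path re-enters a non-critical vertex over $g$ rather than over $g_k$; crit-monotonicity is exactly what rules this out, since it guarantees that once we are in the span of $\crit$-indices we stay there. Finally one should confirm that distinct choices of $(i,k)$ give distinct terms so that no cancellation or collision is missed, which follows from comparing $I\smallsetminus i$ and the basis element $g_k$.
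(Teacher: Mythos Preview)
Your overall plan---start from the reduction-path formula~(\ref{eq:reduction_diff}), show that left targets $e_{I\smallsetminus i}g$ are already critical, and that right targets $e_{I\smallsetminus i}x^{\beta}g_k$ reduce by stripping off $x^{\beta}$---is the same as the paper's. But there is a genuine error in how you invoke crit-monotonicity: you write ``$I\smallsetminus i\subseteq\crit g\subseteq\crit g_k$'', whereas the definition gives the \emph{reverse} inclusion $\crit(\nf(x_ig))\subseteq\crit(g)$, i.e.\ $\crit g_k\subseteq\crit g$. Consequently your claim that $(I\smallsetminus i)\cap\ncrit g_k=\emptyset$ is unjustified: it is entirely possible that some element of $I\smallsetminus i$ lies in $\ncrit g_k$, and in that case the maximal $\ncrit$-index of $e_{I\smallsetminus i}x^{\beta}g_k$ may sit in $I\smallsetminus i$ rather than in $\supp\beta$. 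This is not a cosmetic slip: it is exactly the situation that produces the Iverson bracket $[\,I\smallsetminus i_j\subseteq\crit(g_k)\,]$ in the definition of $d^{R}_j$, and your argument as written never accounts for those vanishing terms.

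The paper closes this gap with a single counting observation you do not use: along any elementary reduction path $\bigbasis{I}{\alpha}{i}\rightarrow\bigbasis{J}{\beta}{j}$ one has $\card{J\cap\crit g_j}\leq\card{I\cap\crit g_i}$ (here crit-monotonicity, in the correct direction, is needed for the type-2 paths). Since a critical endpoint $e_{K}g_{\delta}$ in degree $n-1$ has $\card{K\cap\crit g_{\delta}}=n-1$, every vertex on a contributing path must also have full crit-size $n-1$. This immediately kills any right target with $I\smallsetminus i\not\subseteq\crit g_k$, and for the surviving ones forces each elementary step to be the ``diagonal'' type-1 path that goes up by $j$ and back down by the same $j$, with reduction coefficient $x_j$; no separate exclusion of ``crossed'' paths or sign-telescoping induction is needed. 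If you replace your reversed inclusion by this inequality, the rest of your outline goes through.
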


\begin{proof}
Consider an elementary reduction path,
$\bigbasis{I}{\alpha}{i} \rightarrow \bigbasis{J}{\beta}{j}$,
it is then easy to see that
\[
\card{J \cap \crit g_j} \leq \card{I \cap \crit g_i}.
\]
This observation together with the fact that we are only interested
in reduction paths ending in vertices $\resbasis{J}{}$ with
$J\subseteq \crit g$, gives that the only non-trivial reduction paths
appearing in the sum~(\ref{eq:reduction_diff}) are concatenations of
elementary reduction paths of type 1 that are of the form
\[
  \bigbasis{I}{\alpha}{k} \rightarrow
  \altbigbasis{I\cup j}{\frac{x^{\alpha}}{x_j}}{g_k} \rightarrow
  \altbigbasis{I}{\frac{x^{\alpha}}{x_j}}{g_k},
\]
which have coefficient $x_j$ in the corresponding reduction.
The reduction paths of length 0 that appear in the
sum~(\ref{eq:reduction_diff}) thus contribute
$\sum_{i=1}^{n}(-1)^{i-1}d^{L}_{i}$ to the differential, and the
concatenations of reduction paths mentioned above contribute
$-\sum_{i=1}^{n}(-1)^{i-1}d^{R}_{i}$, which sums to the desired formula.
\end{proof}


Herzog and Hibi have introduced the concept of componentwise linear
ideals \cite{herzog_hibi:componentwise_linear}.
For a graded module $M$ we let $\component{M}{j}$ be the module
generated by all homogeneous elements of degree $j$ in $M$; using
this notation we say that $M$ is \emph{componentwise linear} if
the modules $\component{M}{j}$ have linear resolutions for all $j$.

\begin{Thm} \label{thm:componentwise}
  If the finitely generated graded module $M$ has initially linear
  syzygies, then  $\component{M}{d}$ has initially linear syzygies for
  all $d$.
\end{Thm}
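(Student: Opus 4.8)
The plan is to manufacture, from the given presentation of $M$, an explicit presentation of $\component{M}{d}$ and to verify directly that it has initially linear syzygies. Write $a_i=\deg g_i$. For each $i$ with $a_i\le d$ and each exponent vector $\alpha$ with $\supp\alpha\subseteq\ncrit(g_i)$ and $|\alpha|=d-a_i$ I introduce a free generator $g_{i,\alpha}$ of degree $d$ and set $\canonicalsurj'(g_{i,\alpha})=x^{\alpha}g_i$, giving $\canonicalsurj'\colon\bigoplus_{i,\alpha}S\cdot g_{i,\alpha}\to\component{M}{d}$. The canonical $k$-basis $\{x^{\beta}g_i\mid\supp\beta\subseteq\ncrit(g_i)\}$ of $M$ meets degree $d$ exactly in the elements $x^{\alpha}g_i$ with $|\alpha|=d-a_i$, so these form a $k$-basis of $\component{M}{d}$ in degree $d$; hence $\canonicalsurj'$ is surjective and has no syzygy of degree $d$, and therefore $\kernel\canonicalsurj'\subseteq\bigoplus\mathfrak{m}\cdot g_{i,\alpha}$. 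It remains to produce a term order $\prec'$ for which $\initial_{\prec'}(\kernel\canonicalsurj')$ is generated by linear terms. I take $\prec'$ to be the pullback of $\prec$ along $g_{i,\alpha}\mapsto x^{\alpha}g_i$, namely $x^{\gamma}g_{i,\alpha}\prec' x^{\delta}g_{j,\beta}$ whenever $x^{\gamma+\alpha}g_i\prec x^{\delta+\beta}g_j$, breaking ties — which occur only when $i=j$ and $\gamma+\alpha=\delta+\beta$ — so that the term with lexicographically larger $\alpha$ is the smaller one; and I set $\ncrit(g_{i,\alpha})=\{\,j\in\ncrit(g_i)\mid j\ge\max(\supp\alpha)\,\}$ and $\crit(g_{i,\alpha})=[n]\setminus\ncrit(g_{i,\alpha})$, so that the linear terms $x_jg_{i,\alpha}$ with $j\in\crit(g_{i,\alpha})$ are precisely the candidate generators of the initial module.

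The pivotal step is to identify $\component{M}{d}$ as a subspace of $M$. First I would record that if $x_jg_i\in\initial_{\prec}(\kernel\canonicalsurj)$ and $s\in\kernel\canonicalsurj$ is homogeneous with $\initial_{\prec}(s)=x_jg_i$, then $s$ involves only generators $g_{i'}$ with $a_{i'}\le a_i$: a generator occurring in $s$ with a nonzero constant coefficient would have degree $a_i+1$, and reducing $\canonicalsurj(s)=0$ modulo $\mathfrak{m}M$ would violate $k$-independence of the minimal generators, so every non-leading term of $s$ has a non-constant coefficient and hence a generator of degree $\le a_i$. Consequently the syzygy module of the submodule $M^{\le t}\subseteq M$ generated by the $g_i$ with $a_i\le t$ has $\initial_{\prec}$ generated exactly by the relevant linear terms, so $M^{\le t}$ has $k$-basis $\{x^{\mu}g_i\mid a_i\le t,\ \supp\mu\subseteq\ncrit(g_i)\}$. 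Since $M_d$ is spanned by basis elements $x^{\alpha}g_i$ with $a_i\le d$, we get $\component{M}{d}\subseteq M^{\le d}$, so a canonical basis element $x^{\mu}g_i$ of $M$ lies in $\component{M}{d}$ only if $a_i\le d$, and then $|\mu|\ge d-a_i$ by degree; conversely any such $x^{\mu}g_i$ equals $x^{\gamma}\cdot(x^{\alpha}g_i)$ with $\alpha$ the lowest degree-$(d-a_i)$ part of $\mu$ and $\gamma=\mu-\alpha$, hence lies in $\component{M}{d}$. Thus $\component{M}{d}$ has $k$-basis $\mathcal{B}'=\{x^{\mu}g_i\mid a_i\le d,\ \supp\mu\subseteq\ncrit(g_i),\ |\mu|\ge d-a_i\}$, and $(i,\alpha,\gamma)\mapsto(i,\alpha+\gamma)$ is a bijection from the set of \emph{standard} terms $x^{\gamma}g_{i,\alpha}$ (those with $\supp\gamma\subseteq\ncrit(g_{i,\alpha})$) onto $\mathcal{B}'$, its inverse being the lowest-part splitting.

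Next I would show that every non-standard term is congruent, modulo $\kernel\canonicalsurj'$, to a $k$-combination of strictly $\prec'$-smaller standard terms. For non-standard $x^{\gamma}g_{i,\alpha}$ put $\mu=\gamma+\alpha$. If $\supp\mu\subseteq\ncrit(g_i)$, then $\canonicalsurj'(x^{\gamma}g_{i,\alpha})=x^{\mu}g_i=\canonicalsurj'(x^{\gamma'}g_{i,\alpha'})$ for the standard term given by the lowest-part splitting of $\mu$, and $x^{\gamma'}g_{i,\alpha'}\prec' x^{\gamma}g_{i,\alpha}$ because they have the same image under $g_{i,\alpha}\mapsto x^{\alpha}g_i$ and $\alpha'$ is lexicographically larger. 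Otherwise $\canonicalsurj'(x^{\gamma}g_{i,\alpha})=x^{\mu}g_i=\nf(x^{\mu}g_i)$ is a $k$-combination of canonical basis elements of $M$ that are $\prec$-smaller than $x^{\mu}g_i$; each of them lies in $\component{M}{d}$, hence in $\mathcal{B}'$, hence equals $\canonicalsurj'$ of a standard term that is $\prec'$-smaller by the primary clause of $\prec'$. With $\mathcal{B}'$ a $k$-basis of $\component{M}{d}$ (so the $\canonicalsurj'$-images of the standard terms are $k$-independent), this is exactly the input for the standard Gröbner-basis argument: every non-standard term lies in $\initial_{\prec'}(\kernel\canonicalsurj')$, and no $0\ne s\in\kernel\canonicalsurj'$ can have a standard leading term, since rewriting $\canonicalsurj'(s)=0$ over the basis $\mathcal{B}'$ would force the leading coefficient to vanish. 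Hence $\initial_{\prec'}(\kernel\canonicalsurj')$ is generated by the linear terms $x_jg_{i,\alpha}$, $j\in\crit(g_{i,\alpha})$, and $\component{M}{d}$ has initially linear syzygies.

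I expect the main obstacle to be the basis step: the book-keeping needed to see that truncating the generating set of $M$ to low degrees does not disturb the initial-module condition (this is the content of the remark about leading terms and constant coefficients), together with the check that the lowest-part splitting is compatible both with the normal-form reductions in $M$ and with the tie-breaking rule built into $\prec'$. The remaining arguments — surjectivity of $\canonicalsurj'$, absence of degree-$d$ syzygies, and the final Gröbner-type deduction — are routine once that combinatorial dictionary is in place.
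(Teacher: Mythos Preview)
Your proof is correct and follows essentially the same strategy as the paper's: build the presentation of $\component{M}{d}$ with generators $g_{i,\alpha}$ indexed by degree-$d$ standard monomials, equip it with the pullback of $\prec$ refined by a monomial tie-break, and verify via a bijection between standard terms and a $k$-basis of $\component{M}{d}$ that the initial module of the kernel is generated by linear terms. Your tie-breaking convention (lex on $\alpha$, so the $\alpha$-part carries the small indices and the $x$-part the large ones) is the mirror of the paper's (revlex on the $t$-part), and you are more careful than the paper in justifying that $\{x^{\mu}g_i\mid a_i\le d,\ \supp\mu\subseteq\ncrit g_i\}$ really is a $k$-basis of $\component{M}{d}$, but these are cosmetic differences rather than a genuinely different route.
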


\begin{proof}
  By the hypothesis, there are homogeneous elements $m_1, m_2, \dots ,m_g$,
  that generate $M$, and a presentation
  \[
  0 \longrightarrow
  \kernel \canonicalsurj \longrightarrow
  \bigoplus_{i=1}^{g} S \cdot e_i \stackrel{\canonicalsurj}{\longrightarrow}
  M \longrightarrow 0
  \]
  with $\canonicalsurj(e_i) = m_i$,
  together with a term order `$\prec$' on
  $F = \bigoplus_{i=1}^{g} S \cdot e_i$ such
  that $\kernel \canonicalsurj$ has a Gr\"obner basis $G$ consisting of
  initially linear terms.

  We will now construct an explicit Gr\"obner basis $G_{d}$ for
  the syzygies of $\component{M}{d}$, and we will start by constructing
  a presentation of $\component{M}{d}$. This module is minimally generated
  by the images of the elements $x^{\alpha}e_i$ of degree $d$ in $M$
  that are irreducible with respect to $G$, and has a $k$-basis given by
  all the irreducible elements $x^{\alpha}e_i$ where $\deg e_i \leq d$.
  Thus, we consider the free module with basis elements $t^{\alpha}e_i$
  where $\deg t^{\alpha} + \deg e_i = d$ and $x^{\alpha}e_i$ irreducible with
  respect to $G$, and define the map
  $\canonicalsurj_d$ by
  $\canonicalsurj_d(t^{\alpha}e_i)=x^{\alpha}\canonicalsurj(e_i)=x^{\alpha}m_i$.
  We now have the presentation
  \[
  0 \longrightarrow \kernel \canonicalsurj_{d} \longrightarrow
  \bigoplus S \cdot t^{\alpha}e_i
  \stackrel{\canonicalsurj_d}{\longrightarrow} \component{M}{d} \longrightarrow 0,
  \]
  where the direct sum ranges over all $(\alpha, i)$ such that
  $x^{\alpha} e_i$ is irreducible with respect to $G$ and has degree $d$.

  Now, we  define the term order $\prec_d$ on
  the free module $F_d = \bigoplus S \cdot t^{\alpha}e_i$ by letting
  $x^{\alpha} \cdot t^{\beta}e_i \preceq_{d} x^{\gamma} \cdot t^{\delta} e_j$
  if
  \begin{align*}
    &x^{\alpha+\beta}e_i \prec x^{\gamma+\delta}e_j, \text{ or} \\
    &x^{\alpha+\beta}e_i = x^{\gamma+\delta}e_j, \text{ and }
    t^{\beta} \revlexlesseq t^{\delta}.
  \end{align*}
  Let $G_d$ be a  set consisting  of two types of elements.
  First, for every element
  $x_a \cdot e_i -\sum_{j} g_j \cdot e_j \in G$, and every
  monomial $x^{\alpha}$ of degree $d - \deg e_i$
  such that $\supp \alpha \subseteq \ncrit e_i$ we consider
  $x_ax^{\alpha} e_i - \sum_{j}\sum_k h_{j,k} \cdot e_k$
  such that
  $\nf(x^\alpha g_j e_j) = \sum_k h_{j,k} \cdot e_k =
     \sum_{k,l} c_{j,k,l}x^{\beta_{j,k,l}}e_k$
  and we let
  $x_a \cdot t^{\alpha}e_i -
     \sum_{j,k,l} c_{j,k,l}x^{\beta'_{j,k,l}}\cdot t^{\beta''_{j,k,l}}e_k \in G_d$,
  where $x^{\beta_{j,k,l}} = x^{\beta'_{j,k,l}}x^{\beta''_{j,k,l}}$ in such a way
  that $\deg x^{\beta''_{j,k,l}}e_j = d$.
  Second, for every $e_i$ and every $\alpha$ such that
  $\supp \alpha \subseteq \ncrit e_i$
  and $\deg x^{\alpha} = d - \deg e_i - 1$, we have the elements
  $ x_a \cdot t_{b} t^{\alpha}e_i -  x_b \cdot t_{a} t^{\alpha} e_i$
  for each $a,b \in \ncrit(e_i)$.

  The claim is now that $G_d$ is a Gr\"obner basis for
  $\kernel \canonicalsurj_d$. To start with, it is clear that $G_d$
  lies in the kernel of $\canonicalsurj_d$, so it is sufficient to
  prove that for all $i \geq d$, there is a bijection
  between the terms of degree $i$ in $F_d$ which are irreducible with
  respect to $G_d$, and the terms of degree $i$ in $F$ which are
  of the form $x^{\alpha}e_j$ where $\deg e_j \leq d$ and are
  irreducible with respect to $G$.
  The irreducible terms in $F_d$ of degree $i$ are all
  $x^{\alpha}t^{\beta}e_j$ such that
  $\supp (\alpha+\beta) \subseteq \ncrit e_j$ and
  $\max \supp \alpha \leq \min \supp \beta$, and from this we can conclude
  that the map $x^{\alpha}t^{\beta}e_i \mapsto x^{\alpha+\beta}e_i$
  is a bijection.
\end{proof}

An immediate consequence of the preceding theorem is the following
corollary that generalises \cite{sharifan_varbaro:linear_quotients},
where it is shown that a homogeneous ideal with linear quotients is
componentwise linear.

\begin{Cor}
  A graded module with initially linear syzygies  is componentwise
  linear.
\end{Cor}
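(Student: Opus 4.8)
The plan is to derive the corollary directly from Theorem~\ref{thm:componentwise} together with Corollary~\ref{cor:linres}. Suppose $M$ is a graded $S$-module with initially linear syzygies, and fix an integer $d$. First I would note that, by the very definition of $\component{M}{d}$ as the submodule of $M$ generated by all homogeneous elements of degree $d$, the module $\component{M}{d}$ is generated by homogeneous elements all of the same degree $d$. This verifies one of the two hypotheses of Corollary~\ref{cor:linres} for free.

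Next I would invoke Theorem~\ref{thm:componentwise}, which says that $\component{M}{d}$ again has initially linear syzygies; this supplies the second hypothesis of Corollary~\ref{cor:linres}. Applying that corollary then yields that $\component{M}{d}$ has a linear resolution. Since $d$ was arbitrary, every graded piece $\component{M}{d}$ has a linear resolution, which is precisely the definition of $M$ being componentwise linear, completing the proof.

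There is essentially no obstacle remaining at this stage: all the substantive work has been carried out in Theorem~\ref{thm:componentwise} (the explicit construction of a Gr\"obner basis $G_d$ for the syzygies of $\component{M}{d}$) and in the chain of corollaries giving the regularity, so the only point requiring a moment's care is the bookkeeping check that the presentation of $\component{M}{d}$ produced by Theorem~\ref{thm:componentwise} really is one by generators in a single degree — which, as noted above, is immediate from how $\component{M}{d}$ is defined. I would also remark, for context, that this recovers and generalises the result of \cite{sharifan_varbaro:linear_quotients} for ideals with linear quotients, since such ideals are a special case of modules with initially linear syzygies.
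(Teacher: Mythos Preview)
Your proposal is correct and follows exactly the same approach as the paper: the paper's proof simply reads ``Follows directly from Theorem~\ref{thm:componentwise} and Corollary~\ref{cor:linres},'' and your argument is a fully spelled-out version of precisely this deduction, including the contextual remark about \cite{sharifan_varbaro:linear_quotients}.
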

\begin{proof}
  Follows directly from Theorem~\ref{thm:componentwise} and
  Corollary~\ref{cor:linres}.
\end{proof}

\section{A contracting homotopy}

We will now consider the minimal resolution $\cpx{G}$ of a quadratic
monomial ideal $M$ with initially linear syzygies as a based complex
of $k$-modules by the natural decomposition
\[
G_n = \bigoplus_{\substack{\alpha, I, j\\I \subseteq \crit g_j}} k \cdot x^{\alpha} e_I g_j.
\]
From a Morse matching on the digraph $\digraph{\cpx{G}}$ we will
construct a contracting homotopy $c$ on $\cpx{G}$, that is a $k$-linear map
satisfying
\[
dc + cd = 1 - \canonicalsurj.
\]
The contracting homotopy will be
used in the next section to show the existence of a DGA structure on
$\cpx{G}$ in the case when $M$ is a stable monomial ideal or a
squarefree matroidal monomial ideal.
The matching is constructed as follows.
For $\beta \in \nat^n$, let $V_{\beta,j}$ be the subset of $V$ consisting
of all $x^{\alpha} e_I  g_j$ such that
$\deg_{\nat^n} x^{\alpha}e_I = \beta$.
Exactly as in the proof of Lemma~\ref{lemma:gen_koszul},
we construct a partial  matching $B_{\beta,j}$ on
$\digraph{\cpx{G}}|_{V_{\beta,j}}$
\[
B_{\beta,j} =
\{x^{\alpha}\resbasis{I}{j}
\rightarrow
x^{\alpha}x_{i} \resbasis{I\smallsetminus i}{j}
\mid
i = \min(\supp \alpha \cup I)\cap \crit g_j \}.
\]
\begin{Lem}
  The set $B = \bigcup_{\beta, j}B_{\beta,j}$  is a Morse matching on
  $\digraph{\cpx{G}}$.
\end{Lem}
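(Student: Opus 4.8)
The plan is to imitate the proof of Lemma~\ref{lemma:gen_koszul}: first verify that each $B_{\beta,j}$ is a Morse matching on the restricted digraph $\digraph{\cpx{G}}|_{V_{\beta,j}}$, and then glue the $B_{\beta,j}$ together by means of \cite[Lemma 7]{skoldberg:morse}. Attach to each fibre $V_{\beta,j}$ the monomial $x^{\beta}g_{j}$ of the free module $\bigoplus_{i}S\cdot g_{i}$. Because $M$ is a monomial ideal, $d_{G}$ is homogeneous for the $\nat^{n}$-grading, so an edge $x^{\alpha}e_{I}g_{j}\rightarrow x^{\alpha'}e_{K}g_{l}$ of $\digraph{\cpx{G}}$ preserves the multidegree. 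Hence if the generator is unchanged ($l=j$) the edge stays inside $V_{\beta,j}$, while if the generator changes ($l\neq j$) the edge is produced by a normal-form reduction in the expansion~(\ref{eq:reduction_diff}), which strictly decreases the term order $\prec$, so the monomial $x^{\beta'}g_{l}$ of the target fibre satisfies $x^{\beta'}g_{l}\prec x^{\beta}g_{j}$.

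On a fixed fibre $V_{\beta,j}$ every basis element $x^{\alpha}e_{I}g_{j}$ satisfies $\supp\alpha\cup I=\supp\beta$, so the index $i=\min((\supp\alpha\cup I)\cap\crit g_{j})$ appearing in the definition of $B_{\beta,j}$ equals the fixed element $i_{0}=\min(\supp\beta\cap\crit g_{j})$ (and if this set is empty, $V_{\beta,j}$ is the lone unmatched vertex $x^{\beta}g_{j}$ with $\supp\beta\subseteq\ncrit g_{j}$). Thus $B_{\beta,j}$ is just the bijection $I\leftrightarrow I\smallsetminus i_{0}$ between the finitely many subsets of $\supp\beta\cap\crit g_{j}$, its matched components are isomorphisms (multiplication by the Koszul sign $\sgn{i_{0}}{I}=\pm1$; see below), and, exactly as in the proof of Lemma~\ref{lemma:gen_koszul}, the modified digraph $\morsegraph{\cpx{G}}{B_{\beta,j}}|_{V_{\beta,j}}$ contains no oriented cycle. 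By \cite[Lemma 1]{skoldberg:morse}, $B_{\beta,j}$ is therefore a Morse matching on $\digraph{\cpx{G}}|_{V_{\beta,j}}$.

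To finish, order the index pairs by putting $(\beta',l)\preceq(\beta,j)$ when $x^{\beta'}g_{l}\preceq x^{\beta}g_{j}$ in the term order on $\bigoplus_{i}S\cdot g_{i}$; this order is well founded because $\prec$ is a term order, and distinct pairs correspond to distinct monomials. By the previous paragraphs each $B_{\beta,j}$ is a Morse matching supported inside its own fibre, and by the first paragraph every edge of $\digraph{\cpx{G}}$ runs from a fibre either into the same fibre or into a strictly $\prec$-smaller one; hence \cite[Lemma 7]{skoldberg:morse} applies and yields that $B=\bigcup_{\beta,j}B_{\beta,j}$ is a Morse matching on $\digraph{\cpx{G}}$.

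The point I would prove most carefully, and the main obstacle, is that the matched components are isomorphisms, i.e.\ that the generator-preserving part of $d_{G}$ is the Koszul differential $\sum_{i\in I}\sgn{i}{I}\,x_{i}e_{I\smallsetminus i}g_{j}$. The coefficient of $x_{i_{0}}$ in the $e_{I\smallsetminus i_{0}}g_{j}$-part of $d_{G}(e_{I}g_{j})$ already receives the Koszul contribution $\sgn{i_{0}}{I}$ from $d_{F}$, and the normal-form term contributes nothing there since $x_{i_{0}}g_{j}$ lies in $\initial_{\prec}(\kernel \canonicalsurj)$ and so cannot occur in $\nf(x_{i_{0}}g_{j})$; what remains is to check, via~(\ref{eq:reduction_diff}), that no longer reduction path reintroduces the generator $g_{j}$ at the same multidegree. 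For crit-monotone $M$ — in particular for stable and for squarefree matroidal ideals, which are the cases in which the contracting homotopy will actually be used — this is immediate from the Eliahou--Kervaire-type formula $d=\sum_{i}(-1)^{i-1}(d^{L}_{i}-d^{R}_{i})$. Once this is settled, the acyclicity and the gluing are verbatim repetitions of the arguments already used for $\cpx{F}$.
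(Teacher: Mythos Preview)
Your approach mirrors the paper's: partition the vertex set into the fibres $V_{\beta,j}$, check that each $B_{\beta,j}$ is a Morse matching on its fibre, and then assemble using the term-order descent between fibres via \cite[Lemma~7]{skoldberg:morse}. The paper's own proof is two sentences and does exactly this, invoking ``the same argument as for Lemma~\ref{lemma:gen_koszul}'' for the fibre-wise part and recording that an edge leaving $V_{\beta,j}$ lands in some $V_{\gamma,k}$ with $x^{\gamma}e_{k}\prec x^{\beta}e_{j}$.

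Where you go beyond the paper is in isolating a point it passes over in silence: in Lemma~\ref{lemma:gen_koszul} the matched components are components of the Koszul differential $d_{F}$ and are visibly $\pm1$, whereas here they are components of $d_{G}=\pi(d_{F}-d_{F}\phi d_{F})$, and one must know that the same-generator piece $S\cdot e_{I}g_{j}\to S\cdot e_{I\smallsetminus i_{0}}g_{j}$ is still multiplication by $\sgn{i_{0}}{I}\,x_{i_{0}}$ before the Lemma~\ref{lemma:gen_koszul} argument (both for the isomorphism condition and for acyclicity inside each fibre) can be repeated verbatim. Your resolution via the Eliahou--Kervaire-type formula $d=\sum_{i}(-1)^{i-1}(d^{L}_{i}-d^{R}_{i})$ in the crit-monotone case is correct; since the contracting homotopy is subsequently used only for stable and squarefree matroidal ideals, both of which are crit-monotone, this covers every application the paper makes of the lemma. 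In effect, the paper's brief proof tacitly assumes this Koszul shape of the same-generator part of $d_{G}$, and your write-up makes that dependence explicit while otherwise following the same route.
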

\begin{proof}
The same argument as for Lemma~\ref{lemma:gen_koszul} shows that
each $B_{\beta,j}$ is a Morse matching on $\digraph{\cpx{G}}|_{V_{\beta,j}}$,
and we note that whenever there is an edge from a vertex in
$V_{\beta,j}$ to a vertex in a different set $V_{\gamma,k}$ we have
$x^{\gamma}e_k \prec x^{\beta}e_j$.
\end{proof}
Let
\begin{align*}
  \tilde{\iota}(\alpha,j) &= \min(\supp \alpha \cap \crit (g_j)), \\
  c_0(x^{\alpha} \cdot e_{I}g_j) &=
  [\tilde{\iota}(\alpha,j) < \min I]
  \frac{x^{\alpha}}{x_{\tilde{\iota}(\alpha,j)}} e_{I\cup \tilde{\iota}(\alpha,j)} g_j.
\end{align*}

We can now define the map $c$ by
\begin{equation*}
  c(x^{\alpha} \cdot e_{I}g_j) =
  c_0(x^{\alpha} \cdot e_{I}g_j)
  -
  c
  \left(
    dc_0(x^{\alpha} \cdot e_{I}g_j) - x^{\alpha} \cdot e_{I}g_j
  \right).
\end{equation*}

A consequence of the above lemma is:
\begin{Cor}
  The $k$-linear map $c$ is a contracting homotopy on $\cpx{G}$ such that
  $\im (c) $ is spanned by the elements
  \[
  \{x^{\alpha} e_I g_j \mid \min ((\supp x^\alpha \cup
  I) \cap \crit g_j) \in I\}.
  \]
\end{Cor}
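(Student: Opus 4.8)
The plan is to recognise $c$ as the splitting homotopy attached to the Morse matching $B$ by algebraic Morse theory, in complete analogy with the passage from $\cpx{F}$ and the matching $E$ to $\cpx{G}$ in the discussion before Theorem~\ref{thm:min_res}. The first point is that the recursion defining $c$ is the standard one: $c_{0}$ moves a cell $x^{\alpha}e_{I}g_{j}$ one step \emph{backwards} along the reversed $B$-arrow issuing from it, the Knuth factor $[\tilde{\iota}(\alpha,j)<\min I]$ being exactly the condition that $x^{\alpha}e_{I}g_{j}$ lies in $B^{-}$ (i.e.\ that such a reversed arrow exists), and the correction term $-c(dc_{0}-\id)$ then propagates this along all zig-zag paths. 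Since the lemma above guarantees that $B$ is a Morse matching on $\digraph{\cpx{G}}$, the well-founded order furnished by the matching condition makes the recursion terminate, and \cite[Theorem~2]{skoldberg:morse} applies: $\cpx{G}$ admits a strong deformation retraction onto its Morse complex $\cpx{G}^{B}$ (spanned by the $B$-critical cells), realised by maps $\iota,p$ and the homotopy $c$ with $p\iota=\id$ and $dc+cd=\id-\iota p$.

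It then remains to pin down the critical cells. In homological degree $n=\card I\ge 1$ no cell $x^{\alpha}e_{I}g_{j}$ (with $I\subseteq\crit g_{j}$) is critical: put $i=\min((\supp x^{\alpha}\cup I)\cap\crit g_{j})$, which exists since $\emptyset\neq I\subseteq\crit g_{j}$; if $i\in I$ then $x^{\alpha}e_{I}g_{j}$ is the source of a $B$-arrow, and if $i\notin I$ then $i\in\supp x^{\alpha}$ with $i<\min I$, so $i\in\crit g_{j}$ and $x^{\alpha}e_{I}g_{j}$ is the target of the $B$-arrow out of $(x^{\alpha}/x_{i})\,e_{I\cup i}g_{j}$. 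In degree $0$, $x^{\alpha}g_{j}$ is critical exactly when $\supp x^{\alpha}\cap\crit g_{j}=\emptyset$, i.e.\ when it is a normal form, and these cells form the chosen $k$-basis of $M$. Hence $\cpx{G}^{B}$ is concentrated in degree $0$ and equals $M$, so $\iota p$ vanishes in positive degrees and, in degree $0$, is the $k$-linear projection of $G_{0}$ onto the normal-form basis, which is exactly $\canonicalsurj$. Therefore $dc+cd=1-\canonicalsurj$, so $c$ is a contracting homotopy on $\cpx{G}$.

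For the image, the splitting homotopy of \cite[Theorem~2]{skoldberg:morse} takes values in the span of $B^{+}$, the cells occurring as sources of arrows in $B$ (equivalently, as targets of the reversed arrows); by the definition of $B$ these are precisely the $x^{\alpha}e_{I}g_{j}$ with $\min((\supp x^{\alpha}\cup I)\cap\crit g_{j})\in I$, which is the asserted spanning set. If a direct verification is preferred, one checks that $c_{0}$ already lands in $\lspan B^{+}$ — for the output cell $(x^{\alpha}/x_{\tilde{\iota}})\,e_{I\cup\tilde{\iota}}g_{j}$ one has $\supp(x^{\alpha}/x_{\tilde{\iota}})\cup(I\cup\tilde{\iota})=\supp x^{\alpha}\cup I$, whose least $\crit g_{j}$-element $\tilde{\iota}(\alpha,j)$ lies in $I\cup\tilde{\iota}$ — and that the correction term $-c(dc_{0}-\id)$ inherits this by induction along the well-founded order.

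The step I expect to need the most care is the identification $\iota p=\canonicalsurj$: one must be sure that \emph{every} cell of positive homological degree is $B$-matched, so that the Morse complex really is $M$ placed in degree $0$ and the retraction projector is the augmentation rather than some larger idempotent. Everything else is a specialisation of the apparatus already used for $\cpx{F}$ and presents no genuine difficulty.
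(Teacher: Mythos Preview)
Your proposal is correct and follows essentially the same approach as the paper: both recognise $c$ as the splitting homotopy attached to the Morse matching $B$ and appeal to the general machinery in \cite{skoldberg:morse}. The paper's proof is a single citation to \cite[Lemma~6]{skoldberg:morse} (which packages exactly the contracting-homotopy property and the description of the image), whereas you unpack the argument in detail and cite \cite[Theorem~2]{skoldberg:morse} instead; the content is the same.
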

\begin{proof}
  This follows from~\cite[Lemma 6]{skoldberg:morse}.
\end{proof}

For monomial ideals with crit-monotone presentations, we can say a bit more
about the contracting homotopy, by again using reductions for our
description. We will define the set of \emph{$c$-critical} indices of a basis
element $\resbasis{I}{j}$ by
\[
  \ccrit(\resbasis{I}{j}) =
  \{ i \mid i \in \crit{g_j}, i < \min I   \}.
\]
We have the following formula for the homotopy acting on a basis element:
\[
  c(x^{\alpha} \, \resbasis{I}{\beta}) =
  \sum_{x^{\delta}  \resbasis{J}{\gamma}\in B^{-}}
  \sum_{p \in
    \redpaths{x^{\alpha}  \resbasis{I}{\beta}}{x^{\delta} \resbasis{J}{\gamma}}}
  c_0 \rho_{p}(x^{\alpha} \resbasis{I}{\beta}).
\]
The composition with $c_0$ means that only elementary reductions paths
$\alpha_0 \rightarrow \beta_1 \rightarrow \alpha_1$ where $\alpha_1$ is
in $B^{-}$ will contribute to the result.
These reduction paths are of the form
\[
x^{\alpha} \, \resbasis{I}{\beta} \rightarrow
\frac{x^{\alpha}}{x_i} \, \resbasis{I \cup \{i\}}{\beta} \rightarrow
\frac{x^{\alpha}}{x_i}x^{\delta} \, \resbasis{I}{\gamma},
\]
where $i = \min (\ccrit(\resbasis{I}{\beta}) \cap \supp x^{\alpha})$,
$x^{\delta} g_{\gamma} = \nf(x_ig_{\beta})$, and $I \subseteq \crit(g_{\gamma})$.
We can also note that for each $k$-basis element
$x^{\alpha} \, \resbasis{I}{\beta}$, there is at most one elementary reduction
path emanating from it.
This means that the terms that occur in
$c(x^{\alpha} \, \resbasis{I}{\beta})$ are all of the form
$\frac{x^{\alpha}}{m}n \, \resbasis{i \cup I}{\gamma}$ where $m$ divides
$x^{\alpha}$,

We can now define a $k$-linear function $\rho$ by
setting
\[
  \rho(x^{\alpha} \, \resbasis{I}{\beta}) =
  \begin{cases}
    \frac{x^{\alpha}}{x_i} x^{\delta}\, \resbasis{I}{\gamma}, &\quad
    \text{if } J \neq \emptyset \text{ with } i = \min J, \\
    0, & \quad \text{if } J = \emptyset.
  \end{cases}
\]
where $J = \supp x^{\alpha}  \cap \ccrit(\resbasis{I}{\beta})$ and
$\nf(x_ig_{\beta}) = x^{\delta} g_{\gamma}$.

From these observations we can now deduce the following lemma.

\begin{Lem}\label{lem:homotopy}
  Let $M$ be a crit-monotone monomial ideal with initially linear
  syzygies. The contracting homotopy $c$ is given by
  \[
  c(x^{\alpha} \, \resbasis{I}{\beta}) =
  \sum_{j} c_0 \rho^{j} (x^{\alpha} \, \resbasis{I}{\beta}),
  \]
  where $\rho$ is defined as above.
\end{Lem}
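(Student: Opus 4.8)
The plan is to derive the formula from the reduction-path description of the contracting homotopy recorded just above, namely
\[
c(x^{\alpha}\,\resbasis{I}{\beta}) =
\sum_{x^{\delta}\resbasis{K}{\mu}\in B^{-}}\ \sum_{p\in\redpaths{x^{\alpha}\resbasis{I}{\beta}}{x^{\delta}\resbasis{K}{\mu}}} c_{0}\rho_{p}(x^{\alpha}\,\resbasis{I}{\beta}),
\]
and to show that composing with $c_{0}$ collapses this double sum to the single series $\sum_{j}c_{0}\rho^{j}$. The three ingredients I would isolate are: (i) $c_{0}$ annihilates every symbol $x^{\nu}\resbasis{L}{\lambda}$ with $\min(\supp x^{\nu}\cap\crit g_{\lambda})\not<\min L$, so that only reduction paths whose final elementary piece ends at a vertex of $B^{-}$ contribute; (ii) as observed above, every such terminating elementary reduction path has the shape $x^{\nu}\resbasis{L}{\lambda}\to\tfrac{x^{\nu}}{x_{i}}\resbasis{L\cup i}{\lambda}\to\tfrac{x^{\nu}}{x_{i}}x^{\delta}\resbasis{L}{\lambda'}$, with $i=\min(\supp x^{\nu}\cap\ccrit(\resbasis{L}{\lambda}))$ and $\nf(x_{i}g_{\lambda})=x^{\delta}g_{\lambda'}$, which is exactly one application of $\rho$; and (iii) at most one such elementary reduction path emanates from any basis element, crit-monotonicity being precisely what excludes the competing reduction steps --- those stripping a different index, or those producing a generator whose critical set has shrunk too far --- and so pins down this single shape.

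Granting (i)--(iii), a contributing reduction path out of $x^{\alpha}\resbasis{I}{\beta}$ of length $m$ is unique and passes successively through the symbols $\rho^{0}(x^{\alpha}\resbasis{I}{\beta}),\rho(x^{\alpha}\resbasis{I}{\beta}),\dots,\rho^{m}(x^{\alpha}\resbasis{I}{\beta})$; since each application of $\rho$ returns a single term these iterates make sense, and one should note that whenever $\rho$ formally outputs a symbol $x^{\nu}\resbasis{L}{\lambda'}$ with $L\not\subseteq\crit g_{\lambda'}$, both $c_{0}$ and the next application of $\rho$ kill it, so such spurious terms contribute nothing to $c$ and do no harm. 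Hence the double sum becomes a sum over the length $m\geq 0$, and it remains to check that the $m$-th summand is $c_{0}\rho^{m}(x^{\alpha}\resbasis{I}{\beta})$.

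For that I would expand the elementary reductions along the path: each step emanates from the $B^{-}$ vertex $\sigma_{k}$, so $\rho_{\sigma_{k+1},\sigma_{k}}=-d_{\sigma_{k+1},\sigma_{k}'}\circ d^{-1}_{\sigma_{k},\sigma_{k}'}$ with $\sigma_{k}'$ the intermediate vertex $\tfrac{x^{\nu}}{x_{i}}\resbasis{L\cup i}{\lambda}$; inserting the explicit differential $d=\sum_{i}(-1)^{i-1}(d^{L}_{i}-d^{R}_{i})$ on $\cpx{G}$ one sees that the positional sign of the matched $d^{L}$-component at $\sigma_{k}'$ appears once in $d^{-1}_{\sigma_{k},\sigma_{k}'}$ and once in the $d^{R}$-component producing $\sigma_{k+1}$ (both strip $i$ from the same slot), while the $-1$ with which $d^{R}$ enters the differential cancels the $-1$ in the reduction formula, so the net effect is exactly $\rho(\sigma_{k})$; composing over $k$ yields $\rho_{p}=\rho^{m}$ on $x^{\alpha}\resbasis{I}{\beta}$. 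Finally, since a nonzero $\rho^{m}(x^{\alpha}\resbasis{I}{\beta})$ lying in $B^{-}$ is precisely the endpoint of the length-$m$ path, while in the remaining case $c_{0}\rho^{m}$ vanishes and the series has already stabilised, the reduction-path sum equals $\sum_{j}c_{0}\rho^{j}(x^{\alpha}\resbasis{I}{\beta})$, as claimed. The only genuinely delicate points I anticipate are the sign cancellation in this last step and the precise appeal to crit-monotonicity in (iii); everything else is a repackaging of the reduction-path formula.
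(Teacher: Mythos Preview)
Your argument is the paper's own: the lemma is stated immediately after the observations you label (i)--(iii), and the paper offers nothing further beyond ``from these observations we can now deduce the following lemma.'' Your fleshing-out of the sign cancellation is correct --- the matched $d^{L}$-edge and the $d^{R}$-edge both strip the minimal index $i$ from slot $1$ of $I\cup i$, so the positional sign $(-1)^{0}$ cancels, and the $-1$ in the reduction formula absorbs the $-1$ in front of $d^{R}$.

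One small point needs fixing. Your claim that a spurious output $x^{\nu}\resbasis{L}{\lambda'}$ with $L\not\subseteq\crit g_{\lambda'}$ is killed by both $c_{0}$ and the next $\rho$ is not justified: $c_{0}$ only checks whether $\min(\supp x^{\nu}\cap\crit g_{\lambda'})<\min L$, and $\rho$ only checks whether $\supp x^{\nu}\cap\ccrit(\resbasis{L}{\lambda'})=\emptyset$; both conditions can fail to trigger even when $L\not\subseteq\crit g_{\lambda'}$, so neither map is forced to annihilate the symbol. The correct reading is that $\rho$ itself returns $0$ in this case, exactly matching the Knuth bracket $[\,I\smallsetminus i_{j}\subseteq\crit g_{k}\,]$ in the definition of $d^{R}$ and the explicit hypothesis ``$I\subseteq\crit(g_{\gamma})$'' in the paper's description of the contributing elementary reduction paths. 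With that convention your identity $\rho_{p}=\rho^{m}$ holds on the nose and the reduction-path sum collapses to $\sum_{j}c_{0}\rho^{j}$ as claimed.
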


\section{DGA structures on resolutions}

In this section we will construct a differential graded algebra
structure on the minimal resolution of $S/I$ where $I$ is either a
stable monomial ideal or a squarefree matroidal ideal. We can thereby
extend, with a simpler proof, the result of
Peeva~\cite{peeva:borelfixed} showing the existence of a DGA structure
on the minimal resolution of $S/I$ where $I$ is a stable ideal.

Let $\cpx{\tilde{G}}$ be the resolution of $S/I$ obtained by splicing the
resolution $\cpx{G} \rightarrow I$ with
$0 \rightarrow I \rightarrow S \rightarrow S/I \rightarrow 0$. Thus,
we have
\[
\cpx{\tilde{G}}: \quad
0 \longrightarrow
G_n \longrightarrow
\cdots \longrightarrow
G_1 \longrightarrow
G_0 \longrightarrow
S \longrightarrow
0,
\]
and we can extend the contracting homotopy $c$ defined on $\cpx{G}$ to
$\cpx{\tilde{G}}$ by setting $c(x^{\alpha}) = x^{\alpha - \beta} g_{\beta}$,
where $x^{\beta}$ is the smallest 
monomial generator of $I$ with respect to
$\prec$ that divides $x^{\alpha}$. It is easy to see that with this
definition, $c^2(x^{\alpha}) = 0$, and therefore $c^2 = 0$.

We will now define a map
$\mu : \cpx{\tilde{G}} \otimes_S \cpx{\tilde{G}}
\longrightarrow \cpx{\tilde{G}}$
so $\mu$ is the multiplication in a DGA structure on
$\cpx{\tilde{G}}$. The technique we will use to establish this result
rests upon the following Lemma, which is a special case
of~\cite[Theorem IX.6.2]{maclane:homology}.

\begin{Lem}\label{lem:maclane}
  Suppose that $\cpx{X}$ and $\cpx{Y}$ are complexes of $S$-modules,
  where $X_n = S\otimes_k V_n$ and $Y_n = S \otimes_k W_n$ for
  $k$-spaces $V_n$ and $W_n$, $n \geq 0$. Furthermore, suppose that
  $\cpx{Y}$ is acyclic, with a contracting homotopy $c$ satisfying
  $c^2 =0$. Then, every $S$-linear map $\phi_0:X_0 \longrightarrow Y_0$
  has a unique lifting to a chain map $\phi: \cpx{X} \longrightarrow \cpx{Y}$
  satisfying $\phi(V_n) \subseteq \im(c)$. This map is defined
  inductively by
  \[
  \phi_{n+1}(\bar{x}) = c\phi_{n}d(\bar{x}), \qquad \bar{x} \in V_{n+1}.
  \]
\end{Lem}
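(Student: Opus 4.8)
The plan is to reduce the statement to the classical comparison theorem for projective resolutions, in the sharpened form given by MacLane, where the contracting homotopy is used to make the lift canonical. First I would recall the setup: $\cpx{Y}$ is acyclic (in the sense of a resolution, i.e. $\homology_n(\cpx{Y}) = 0$ for $n \geq 1$ and we track what happens in degree $0$ via $\phi_0$), and $c$ is a contracting homotopy, meaning $dc + cd = 1$ away from the relevant augmentation and $c^2 = 0$. The key point is that because each $X_n = S \otimes_k V_n$ is $S$-free on the basis $V_n$, an $S$-linear map out of $X_n$ is determined by its values on $V_n$; so defining $\phi_{n+1}(\bar x) = c\,\phi_n d(\bar x)$ for $\bar x \in V_{n+1}$ and extending $S$-linearly is legitimate, and by construction $\phi_n(V_n) \subseteq \im(c)$ for all $n \geq 1$ (and we may also arrange $\phi_0(V_0) \subseteq \im(c)$ or simply treat degree $0$ as the given datum, matching whichever convention the ambient resolution $\cpx{\tilde G}$ uses).

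Next I would verify that $\phi$ so defined is a chain map, i.e.\ $d\phi_{n+1} = \phi_n d$ on $V_{n+1}$, by induction on $n$. Apply $d$ to the recursive formula: $d\phi_{n+1}(\bar x) = d c\,\phi_n d(\bar x)$. Using $dc = 1 - cd$ this equals $\phi_n d(\bar x) - c\, d\,\phi_n d(\bar x)$. In the inductive step $d\phi_n = \phi_{n-1}d$, so $d\phi_n d(\bar x) = \phi_{n-1} d d(\bar x) = 0$; hence the correction term vanishes and $d\phi_{n+1}(\bar x) = \phi_n d(\bar x)$, as required. The base case $n = 0$ is exactly the hypothesis that $\phi_0$ is a chain map in degree $0$ (it commutes with the augmentation), together with the homotopy identity in the lowest degree; one has to be slightly careful here about the precise form of $dc + cd$ at the bottom of the complex, since for a resolution of $S/I$ the identity reads $dc + cd = 1 - \canonicalsurj$ rather than $dc+cd=1$, but this only affects the degree-$0$ bookkeeping and not the inductive step.

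For uniqueness, suppose $\psi$ is another chain map lifting $\phi_0$ with $\psi(V_n) \subseteq \im(c)$. Set $\theta = \phi - \psi$; then $\theta$ is a chain map with $\theta_0 = 0$ and $\theta(V_n) \subseteq \im(c)$. I would show $\theta_n = 0$ by induction: given $\theta_{n} = 0$, for $\bar x \in V_{n+1}$ we have $d\theta_{n+1}(\bar x) = \theta_n d(\bar x) = 0$, so $\theta_{n+1}(\bar x) \in \kernel(d) \cap \im(c)$. Now if $y = c(z) \in \im(c)$ with $dy = 0$, then applying the homotopy identity gives $y = dc(y) + cd(y) = dc(y)$, while $c(y) = c^2(z) = 0$, forcing $y = 0$; hence $\theta_{n+1}(\bar x) = 0$, and since $V_{n+1}$ is an $S$-basis, $\theta_{n+1} = 0$. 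This is where the hypothesis $c^2 = 0$ is essential and is, I expect, the only genuinely delicate point; everything else is a routine induction once the free-basis reduction and the homotopy identities are in place. (The reference \cite[Theorem IX.6.2]{maclane:homology} covers the general homological statement; here we only need the special case just described, so no further machinery is required.)
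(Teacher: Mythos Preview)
Your argument is correct. The paper does not actually prove this lemma: it simply records that the statement is a special case of \cite[Theorem~IX.6.2]{maclane:homology} and moves on. What you have written is a clean, self-contained unpacking of that reference in the present setting --- the inductive check that $\phi$ is a chain map via $dc = 1 - cd$ together with $d^2 = 0$, and the uniqueness argument showing $\ker d \cap \im c = 0$ from $c^2 = 0$, are precisely the content of MacLane's proof. Your version has the added benefit of isolating exactly where $c^2 = 0$ is needed (only for uniqueness), which is worth knowing since the paper leans on this uniqueness repeatedly afterwards to prove commutativity and associativity of $\mu$. Your caution about the degree-$0$ identity $dc + cd = 1 - \canonicalsurj$ is also well placed: in the intended application $\phi_0 d$ lands in $I = \ker\canonicalsurj$, so the correction term vanishes and the base case goes through.
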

We call elements of $V_n \subseteq S\otimes_k V_n$ \emph{reduced}; the
reduced elements of $\cpx{\tilde{G}}$ are thus the $S$-basis elements
of $\tilde{G}_n$.

Thus, we define our map $\mu$ on the reduced elements $\bar{x}$ and
$\bar{y}$ of degrees $m$ and $n$ respectively by:
\begin{equation}
  \mu(\bar{x}\otimes\bar{y}) = c \mu d(\bar{x}\otimes\bar{y}) =
  c \mu (d(\bar{x}) \otimes \bar{y}) +
  (-1)^{m} c \mu(\bar{x} \otimes d(\bar{y})).
\end{equation}

Now, consider the composition
\[
\cpx{\tilde{G}}
\stackrel{\simeq}{\longrightarrow}
S \otimes_S \cpx{\tilde{G}}
\stackrel{\iota \otimes 1}{\longrightarrow}
\cpx{\tilde{G}} \otimes_S \cpx{\tilde{G}}
\stackrel{\mu}{\longrightarrow}
\cpx{\tilde{G}},
\]
which is the identity in degree 0, and since
$\mu(1\otimes\bar{x}) \in \im(c)$; by Lemma~\ref{lem:maclane}, this is
then the identity in all degrees, so $1 \in \tilde{G}_0$ is a
multiplicative identity element.

Furthermore, letting $\tau$ be the twist morphism,
$\tau(x\otimes y) = (-1)^{mn} y \otimes x$ where
$x$ and $y$ are homogeneous of degrees $m$ and $n$ respectively,
we have that $\mu$ and $\mu \circ \tau$ both are chain maps
$\cpx{\tilde{G}} \otimes \cpx{\tilde{G}} \longrightarrow \cpx{\tilde{G}}$
that in degree 0 are given by
$\mu(1\otimes 1) = 1 = \mu\circ\tau(1\otimes 1)$. Since for reduced
elements $\bar{x}$ and $\bar{y}$ we have that
$\mu \circ \tau(\bar{x} \otimes \bar{y}) \in \im(c)$,
Lemma~\ref{lem:maclane} gives that $\mu = \mu \circ \tau$, so $\mu$ is
graded commutative.
Thus, to show that $\mu$ gives a DGA structure to $\cpx{\tilde{G}}$,
it remains to show that $\mu$ is associative, that is, that
$\mu(1 \otimes \mu) = \mu(\mu \otimes 1)$.


Recall that for a basis element $e_{I}g_{j}$ we have
$
\ccrit(e_{I}g_j) =
\{ i \mid i \in \crit{g_j}, i < \min I   \},
$
and we now extend this to the whole of $\cpx{\tilde{G}}$ by letting
\[
\ccrit(\sum_{i} p_i \cdot e_{I_i}g_{j_i}) =
\bigcup_{i} \ccrit(e_{I_i}g_{j_i})
\]
where we have no redundant terms in the sum.
We are now in a position to formulate and prove the lemma that we will
use to show associativity. We will in the following write $x \star y$
for $\mu(x \otimes y)$.
\begin{Lem}\label{lem:intersection}
  If, for all basis elements $e_{I}g_{i}, e_{J}g_{j}$,
  we have
  \begin{equation}\label{eq:intersection}
  \ccrit (e_{I}g_{i} \star e_{J}g_{j}) \subseteq
  \ccrit(e_{I}g_{i}) \cap  \ccrit(e_{J}g_{j}),
  \end{equation}
  then $\star$ is associative.
\end{Lem}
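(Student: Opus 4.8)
The plan is to use Lemma~\ref{lem:maclane} to reduce associativity to a statement about reduced elements, and then to use the hypothesis~(\ref{eq:intersection}) together with the explicit description of the contracting homotopy $c$ from Lemma~\ref{lem:homotopy} to show that the two triple products agree on reduced basis elements. More precisely, both $\mu(1\otimes\mu)$ and $\mu(\mu\otimes 1)$ are chain maps $\cpx{\tilde G}\otimes\cpx{\tilde G}\otimes\cpx{\tilde G}\longrightarrow\cpx{\tilde G}$; since $\cpx{\tilde G}\otimes\cpx{\tilde G}\otimes\cpx{\tilde G}$ is again of the form $S\otimes_k W_\bullet$ with $W_\bullet$ the triple tensor product of the $k$-bases, by the uniqueness part of Lemma~\ref{lem:maclane} it suffices to check that these two maps agree in degree $0$ (where both send $1\otimes 1\otimes 1$ to $1$) and that one of them, say $\mu(\mu\otimes 1)$, sends each reduced basis element $\bar x\otimes\bar y\otimes\bar z$ into $\im(c)$. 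The latter is the crux.

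First I would record what $\im(c)$ is: by the corollary following the construction of $c$, $\im(c)$ is spanned by those $x^\alpha e_I g_j$ with $\min((\supp x^\alpha\cup I)\cap\crit g_j)\in I$, which is exactly the condition that $\ccrit(x^\alpha e_I g_j)\cap\supp x^\alpha=\emptyset$ (no available $c$-critical index is ``unused''). Extended to $\cpx{\tilde G}$, an element $\xi$ lies in $\im(c)$ iff for every term $x^\alpha e_I g_j$ of $\xi$ we have $\supp x^\alpha\cap\ccrit(e_I g_j)=\emptyset$. So I need: for reduced $\bar x = e_I g_i$, $\bar y = e_J g_j$, $\bar z = e_K g_k$, every term of $(\bar x\star\bar y)\star\bar z$ has the property that the monomial coefficient avoids the $\ccrit$ of the corresponding module basis element. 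The point of~(\ref{eq:intersection}) is that it propagates: $\ccrit$ of a product is contained in the intersection of the $\ccrit$'s of the factors, and this is stable under taking a further product, so the $\ccrit$ of any iterated product is contained in $\ccrit(\bar x)\cap\ccrit(\bar y)\cap\ccrit(\bar z)$, independent of how we associate.

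The key steps, in order, are: (1) translate $\im(c)$ into the ``no used $c$-critical index'' condition on each term; (2) show, using Lemma~\ref{lem:homotopy} and the formula $c\rho$, that the monomial multiplying a basis element $e_L g_l$ appearing in $c\mu d(\cdots)$ is built up only from the monomials that already appeared and the variables $x_i$ with $i\in\ccrit$ of intermediate terms — in particular each application of $c$ either leaves a term already in $\im(c)$ (its $c_0$ contribution vanishes) or moves along a reduction path that strictly decreases the relevant monomial, so the recursion terminates and produces terms whose coefficients still avoid $\ccrit(e_L g_l)$; (3) combine this with the inclusion~(\ref{eq:intersection}), applied iteratively, to conclude that $\mu(\mu\otimes 1)(\bar x\otimes\bar y\otimes\bar z)\in\im(c)$; (4) invoke the uniqueness in Lemma~\ref{lem:maclane} to deduce $\mu(\mu\otimes 1)=\mu(1\otimes\mu)$.

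The main obstacle I anticipate is step (2): making precise that the homotopy $c$, when applied inside the recursive definition of $\mu$, never introduces a variable into the monomial coefficient that would violate membership in $\im(c)$ for the final basis element, and that~(\ref{eq:intersection}) is genuinely enough to control this. One has to be careful that $\mu d(\bar x\otimes\bar y\otimes\bar z)$ is itself already ``mostly reduced'' — its terms are of the form $x^\gamma\,(e_L g_l)$ with the $e_L g_l$ being reduced products of lower degree — and then that applying $c$ to $x^\gamma\,e_L g_l$ only uses indices in $\ccrit(e_L g_l)$, which by the propagated inclusion lies in $\ccrit(\bar x)\cap\ccrit(\bar y)\cap\ccrit(\bar z)$; the asymmetry-breaking role of the hypothesis is precisely that this bound does not depend on the bracketing. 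Verifying that the monomial coefficients behave as claimed under the reductions $\rho$ (which multiply by a single $x_i$ with $i\in\ccrit$ and then take normal forms) is the technical heart, but it is a bookkeeping argument once the description in Lemma~\ref{lem:homotopy} is in hand.
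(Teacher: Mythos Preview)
Your approach is essentially the paper's: reduce associativity, via Lemma~\ref{lem:maclane}, to showing that the triple products of reduced basis elements land in $\im(c)$, and use the hypothesis~(\ref{eq:intersection}) to verify this. Two remarks, however.

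First, a precision issue: it is not enough to show that \emph{one} of $\mu(\mu\otimes 1)$ and $\mu(1\otimes\mu)$ sends reduced elements into $\im(c)$. Lemma~\ref{lem:maclane} gives uniqueness only among chain maps with that property, so you must show it for \emph{both} bracketings before concluding they coincide. Your argument in steps (2)--(3) does apply symmetrically, so this is easily fixed; the paper handles it by proving $e_Ig_i\star(e_Jg_j\star e_Kg_k)\in\im(c)$ directly and then invoking graded commutativity of $\star$ to get the other bracketing.

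Second, you are making step~(2) much harder than necessary by bringing in Lemma~\ref{lem:homotopy} and the reduction paths. The paper's argument is shorter: a term $x^{\alpha}\,e_Lg_l$ of $e_Jg_j\star e_Kg_k$ already satisfies $\supp x^{\alpha}\cap\ccrit(e_Lg_l)=\emptyset$, simply because $e_Jg_j\star e_Kg_k = c\mu d(e_Jg_j\otimes e_Kg_k)\in\im(c)$ \emph{by definition} of $\star$ on reduced elements. Then $e_Ig_i\star(x^{\alpha}e_Lg_l)=x^{\alpha}\,(e_Ig_i\star e_Lg_l)$; the factor $e_Ig_i\star e_Lg_l$ is again in $\im(c)$ by definition, and by~(\ref{eq:intersection}) each of its terms has $\ccrit$ contained in $\ccrit(e_Lg_l)$, so multiplying by $x^{\alpha}$ cannot introduce a $c$-critical variable. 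No explicit formula for $c$ is needed, only the characterisation of $\im(c)$ you already recorded.
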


\begin{proof}
  Since $1 \star x = x $ for all $x \in \cpx{\tilde{G}}$, we only have
  to show that
  \begin{equation} \label{eq:assoc1}
  e_I g_i \star (e_J g_j \star e_K g_k) =
  (e_I g_i \star e_J g_j) \star e_K g_k
  \end{equation}
  for all basis elements $e_I g_i$, $e_J g_j$ and $e_K g_k$,
  and this now follows if we can show that
  \begin{equation} \label{eq:assoc2}
  e_I g_i \star (e_J g_j \star e_K g_k) \in \im c,
  \end{equation}
  holds for all $e_I g_i$, $e_J g_j$ and $e_K g_k$,
  since then by the graded commutativity of $\star$ we would
  also get that
  \begin{equation} \label{eq:assoc3}
  (e_I g_i \star e_J g_j) \star e_K g_k \in \im c
  \end{equation}
  and by Lemma~\ref{lem:maclane} we can conclude
  that they are equal.
  Now, suppose that $x^{\alpha} \, \resbasis{L}{l}$ occurs as a term in
  $e_J g_j \star e_K g_k$, then, by the condition of the lemma, no
  variable occurring in $x^{\alpha}$ will be c-critical in
  $\resbasis{I}{i}\star\resbasis{L}{l}$, and thus
  (\ref{eq:assoc2}) follows, and the proof is complete.
\end{proof}

We will now in a series of lemmata show that the conditions of
Lemma~\ref{lem:intersection} are satisfied for the minimal resolution
of stable and squarefree matroidal ideals. We start with the case
where one of the basis elements have minimal degree.

\begin{Lem}\label{lem:base_case_stable}
  Let $g_\alpha$ and $\resbasis{I}{\beta}$ be basis elements in the
  minimal resolution of a stable ideal. We then have
  \[
  \ccrit(c(x^\alpha \, \resbasis{I}{\beta})) \subseteq
  \ccrit(\modbasis{\alpha}).
  \]
\end{Lem}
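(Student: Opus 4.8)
The plan is to unwind the explicit formula for $c$ on a stable ideal coming from Lemma~\ref{lem:homotopy}, namely $c(x^\alpha\,\resbasis{I}{\beta}) = \sum_j c_0\rho^j(x^\alpha\,\resbasis{I}{\beta})$, and show that every term surviving in this sum has its c-critical set contained in $\ccrit(\modbasis{\alpha})$. Here the starting basis element is $\modbasis{\beta}$ in homological degree $0$ (the syzygy-free generator), so $I=\emptyset$ and $\ccrit(\modbasis{\beta})=\crit(g_\beta)$; and $\ccrit(\modbasis{\alpha})$ refers to the normal-form decomposition of $x^\alpha g_\beta$, i.e. writing $\nf(x^\alpha g_\beta)$ with no redundant terms, $\ccrit(\modbasis{\alpha})=\bigcup\crit(g_\gamma)$ over the generators $g_\gamma$ appearing. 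So what must be shown is: every term $\frac{x^\alpha}{m}n\,\resbasis{i\cup I}{\gamma}$ occurring in $c(x^\alpha\resbasis{I}{\beta})$ satisfies $\ccrit(\resbasis{i\cup I}{\gamma})\subseteq\crit(g_{\delta})$ for some $g_\delta$ in $\nf(x^\alpha g_\beta)$.

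First I would recall, from the discussion preceding Lemma~\ref{lem:homotopy}, that a single application of $\rho$ replaces $x^\alpha\resbasis{I}{\beta}$ by $\frac{x^\alpha}{x_i}x^\delta\,\resbasis{I}{\gamma}$ where $i=\min(\supp x^\alpha\cap\ccrit(\resbasis{I}{\beta}))$ and $x^\delta g_\gamma=\nf(x_i g_\beta)$; and that $c_0$ then adjoins an index to $I$. The key structural input is crit-monotonicity: $\crit(\nf(x_i g_\beta))\subseteq\crit(g_\beta)$, which controls how $\crit$ — hence $\ccrit$ — can only shrink along a $\rho$-step. Iterating, each $\rho$-step peels off one variable $x_i$ from the monomial and reduces $g_\beta$ one step toward its normal form, so after finitely many steps the monomial part $\frac{x^\alpha}{m}$ together with the reduced generator reproduces exactly $\nf(x^\alpha g_\beta)$, or rather a term thereof. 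The combinatorial heart is to verify that the sequence of minima $i_1,i_2,\dots$ chosen by successive $\rho$'s, together with the indices adjoined by $c_0$, is precisely what makes $\ccrit$ of the final term sit inside the critical set of the relevant normal-form summand. For stable ideals this should be transparent because the normal form is the Eliahou--Kervaire normal form: $\crit(g_\beta)=\{1,\dots,\max\supp\beta\}$ (or the analogous initial-segment description adapted to the generating set), and crit-monotonicity then literally says passing to $\nf(x_i g_\beta)$ cannot increase $\max\supp$, so the c-critical indices stay within the initial segment determined by $x^\alpha g_\beta$.

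Concretely, I would argue by induction on the number of $\rho$-iterations, i.e. on $\deg x^\alpha$ (or on $\card(\supp x^\alpha\cap\crit g_\beta)$). In the base case $\supp x^\alpha\cap\ccrit(\modbasis{\beta})=\emptyset$, so $\rho=0$, $c_0$ acts trivially on $\modbasis{\beta}$ since there is nothing to adjoin below $\min I=\min\emptyset$, and $c(x^\alpha\modbasis{\beta})$ reduces to the already-established splicing formula $x^\alpha=x^{\alpha-\beta'}g_{\beta'}$ for the smallest generator dividing, whose c-critical content is controlled by $\ccrit(g_{\beta'})\subseteq\ccrit(\modbasis{\alpha})$; this needs the observation that $g_{\beta'}$ is one of the generators appearing in $\nf(x^\alpha g_\beta)$. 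For the inductive step: a first $\rho$ sends $x^\alpha\modbasis{\beta}\mapsto\frac{x^\alpha}{x_{i_1}}x^{\delta_1}\resbasis{\emptyset}{\gamma_1}$ with $g_{\gamma_1}$ a summand of $\nf(x_{i_1}g_\beta)$; then $c_0$ adjoins $i_1$ (the minimum that was consumed), giving $\frac{x^\alpha}{x_{i_1}}x^{\delta_1}\resbasis{\{i_1\}}{\gamma_1}$; applying the induction hypothesis to $c$ of the remaining piece $\frac{x^\alpha}{x_{i_1}}x^{\delta_1}\resbasis{\{i_1\}}{\gamma_1}$ (strictly smaller monomial weight on the critical part), and using crit-monotonicity to bound $\ccrit$ along the chain of normal-form reductions $x^\alpha g_\beta\to (x^\alpha/x_{i_1})x^{\delta_1}g_{\gamma_1}\to\cdots$, yields the claim. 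The main obstacle I anticipate is bookkeeping the interaction between the index $i_1$ adjoined by $c_0$ and the c-critical set of $g_{\gamma_1}$: one must check $i_1\notin\ccrit(\resbasis{\{i_1\}}{\gamma_1})$ automatically (it is, since $i_1\in I$ now and $\ccrit$ only sees indices strictly below $\min I$) so that adjoining it does not reintroduce a bad index, and that $i_1$ was already critical for $g_\beta$ hence for the normal form of $x^\alpha g_\beta$. This is where stability (equivalently the initial-segment shape of $\crit$) makes the argument clean; the general crit-monotone statement would require a touch more care, but for this lemma the stable hypothesis suffices.
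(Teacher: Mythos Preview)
Your proposal rests on two misreadings of the statement, and because of them the argument never locks onto the actual mechanism.

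First, $e_{I}g_{\beta}$ is a basis element with an arbitrary index set $I\subseteq\crit g_{\beta}$; you have set $I=\emptyset$ from the outset. The lemma is needed (see the proof of the final theorem) precisely when $|I|=0$ on the \emph{first} factor but $|J|$ may be positive on the second, i.e.\ for $c(d(g_{\alpha})\star e_{J}g_{\beta}) = c(x^{\alpha}\,e_{J}g_{\beta})$ with general $J$. Second, $g_{\alpha}$ is simply the basis element in $G_{0}$ attached to the minimal monomial generator $x^{\alpha}$ of the stable ideal; hence $\ccrit(g_{\alpha})=\crit(g_{\alpha})$ is the fixed initial segment determined by that generator. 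It is not, as you write, a union over the generators appearing in $\nf(x^{\alpha}g_{\beta})$. With these corrections the ``splicing formula'' for $c$ on $S$ plays no role here, and no induction is needed.

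The paper's proof is a direct term-by-term check, and you already have the main ingredient in hand. By Lemma~\ref{lem:homotopy} every nonzero summand of $c(x^{\alpha}\,e_{I}g_{\beta})$ is of the form $c_{0}(x^{\alpha_{j}}\,e_{I}g_{\beta_{j}}) = (x^{\alpha_{j}}/x_{k})\,e_{I\cup k}\,g_{\beta_{j}}$ with $x^{\alpha_{j}} = x^{\alpha}v_{j}/u_{j}$, $u_{j}\mid x^{\alpha}$, and $v_{j}g_{\beta_{j}}=\nf(u_{j}g_{\beta})$. The index $k$ adjoined by $c_{0}$ lies in $\supp\alpha_{j}\cap\crit g_{\beta_{j}}$; since $v_{j}g_{\beta_{j}}$ is a normal form, $\supp v_{j}\cap\crit g_{\beta_{j}}=\emptyset$, forcing $k\in\supp(x^{\alpha}/u_{j})\subseteq\supp\alpha$. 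Now use the one feature of stable ideals you mentioned: $\crit$ is an initial segment, so $\ccrit(e_{I\cup k}g_{\beta_{j}}) = \crit(g_{\beta_{j}})\cap[1,k-1] = [1,k-1]$, and since $k\in\supp\alpha$ this sits inside $\crit(g_{\alpha})=\ccrit(g_{\alpha})$. That is the whole argument; your inductive scaffolding and normal-form bookkeeping are unnecessary once the statement is read correctly.
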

\begin{proof}
  By Lemma~\ref{lem:homotopy}, we have that
  \[
  c(x^\alpha \, \resbasis{I}{\beta}) =
  \sum_{j} c_0(x^{\alpha_j} \, \resbasis{I}{\beta_j})
  \]
  where $\alpha_j$ and $\beta_j$ satisfy
  $x^{\alpha_j} = x^{\alpha}v_j/u_j$ and $v_j g_{\beta_j} = \nf(u_jg_{\beta})$
  for some monomial $u_j$ of degree $j$ dividing $x^{\alpha}$.
  Now, if $c_0(x^{\alpha_j} \, \resbasis{I}{\beta_j}) \neq 0$, then
  by the crit-monotonicity of the stable ideals,
  $c_0(x^{\alpha_j} \, \resbasis{I}{\beta_j}) = x^{\alpha_j}/x_k
  \, \resbasis{I\cup k}{\beta_j}$, for a $k \in \supp (x^{\alpha}/u_j)$, and thus
  \[
  \ccrit(x^{\alpha_j}/x_k \, \resbasis{I\cup k}{\beta_j}) =  [1, k -1]
  \subseteq \crit(g_{\alpha}).
  \]
\end{proof}
Before showing the corresponding result for squarefree matroidal
ideals, we have to say something about the term order we use.
\begin{Lem}
  Let $M$ be a squarefree matroidal ideal in $S$, then $M$ is shellable
  with respect to a lexicographic ordering.
\end{Lem}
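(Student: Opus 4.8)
The plan is to recall the combinatorial description of a squarefree matroidal ideal and extract a shelling order directly from a suitable linear order on the variables. Let $M = (m_1, \dots, m_t)$ be squarefree matroidal, so that the supports $\sigma_i = \supp m_i$ are precisely the bases of a matroid on $[n]$; in particular all the $m_i$ have the same degree, say $d$, so $\card{\sigma_i} = d$ for all $i$. Fix the lexicographic order on monomials induced by $x_1 > x_2 > \dots > x_n$, and let ``$\sqsubset$'' be the total order on $m_1, \dots, m_t$ obtained by sorting the generators lexicographically (equivalently, sorting the sets $\sigma_i$). I must produce, for every pair $m_j \sqsubset m_i$, an index $g = g(m_k, m_i)$ and a generator $m_k$ with $m_k \sqsubset m_i$, $x_g m_i = \lcm(m_k, m_i)$, and $x_g m_i \mid \lcm(m_j, m_i)$.

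The key step is the \emph{exchange property} of matroids. Given $m_j \sqsubset m_i$ with $\sigma_i \ne \sigma_j$, consider the smallest variable index $a \in \sigma_i \setminus \sigma_j$ (smallest in the sense of the chosen linear order, i.e.\ the one making $x_a$ lex-largest among the ``extra'' variables of $\sigma_i$). One checks first that such an $a$ satisfies $a \notin \sigma_j$ but there is some $b \in \sigma_j \setminus \sigma_i$ with $b < a$: this is exactly where $m_j \sqsubset m_i$ in the lex order is used — if every element of $\sigma_j \setminus \sigma_i$ were larger than every element of $\sigma_i \setminus \sigma_j$, then $m_j$ would be lex-larger than $m_i$, a contradiction. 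Now apply the basis exchange axiom to $\sigma_i$ and $\sigma_j$ at the element $a \in \sigma_i \setminus \sigma_j$: there is some $b \in \sigma_j \setminus \sigma_i$ such that $\sigma_k := (\sigma_i \setminus a) \cup b$ is again a basis, hence equals $\supp m_k$ for some generator $m_k$. By the strong (symmetric) exchange property one may moreover choose $b < a$, which is the version needed here.

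With this $m_k$ in hand, the three required properties are now routine. Since $\sigma_k = (\sigma_i \setminus a) \cup b$ with $b \notin \sigma_i$ and $a \notin \sigma_k$, we get $\lcm(m_k, m_i) = x_b m_i = x_b x_a \cdot (m_i / x_a)$, so $g(m_k, m_i) = b$ works: $x_b m_i = \lcm(m_k, m_i)$. Because $b \in \sigma_j$ we have $x_b \mid m_j$, hence $x_b \mid \lcm(m_j, m_i)$ and therefore $x_b m_i \mid \lcm(m_j, m_i)$. Finally $\sigma_k$ and $\sigma_i$ agree outside $\{a, b\}$ with $b < a$, and $\sigma_j$ (being lex-smaller than $\sigma_i$) shares this initial behaviour up to the first index where it differs from $\sigma_i$, which is at most $b$; comparing lexicographically one sees $m_k \sqsubset m_i$ since replacing $a$ by the smaller index $b$ produces a lex-larger monomial, i.e.\ a generator earlier in the order — so $m_k \sqsubset m_i$ as required. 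Hence ``$\sqsubset$'' is a shelling order and $M$ is shellable with respect to the lexicographic ordering.

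The main obstacle is the strengthening of the exchange axiom: the naive basis-exchange axiom gives \emph{some} $b \in \sigma_j \setminus \sigma_i$ with $(\sigma_i \setminus a)\cup b$ a basis, but for the lex-comparison forcing $m_k \sqsubset m_i$ I need $b$ strictly smaller than $a$ (in the variable order). This is exactly the symmetric/strong basis exchange property for matroids, which I will invoke as a standard fact; the bookkeeping with the lex order — verifying that the first point of disagreement between $\sigma_j$ and $\sigma_i$ lands in the right place to make the divisibility and the ``$\sqsubset$'' conditions compatible — is the only genuinely fiddly part, and it is precisely what pins down the choice of $a$ as the lex-largest element of $\sigma_i \setminus \sigma_j$.
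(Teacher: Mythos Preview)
Your overall strategy---verify the shelling condition directly from the matroid exchange axiom---is exactly what the paper has in mind (it simply cites the Herzog--Takayama argument for the reverse-lex order and remarks that the lex case is identical). The execution, however, contains a real gap.

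You fix $a = \min(\sigma_i \setminus \sigma_j)$, apply basis exchange at $a$ to obtain some $b \in \sigma_j \setminus \sigma_i$ with $(\sigma_i \setminus a)\cup b$ a basis, and then assert that by the symmetric (``strong'') exchange property one may arrange $b < a$. This last assertion is false. Symmetric exchange says only that $b$ can be chosen so that \emph{both} $(\sigma_i\setminus a)\cup b$ and $(\sigma_j\setminus b)\cup a$ are bases; it imposes no order relation between $b$ and $a$. Concretely, take the rank-$2$ matroid on $\{1,2,3,4\}$ in which $1$ and $4$ are parallel (bases are all $2$-subsets except $\{1,4\}$), and let $\sigma_i=\{2,4\}$, $\sigma_j=\{1,3\}$. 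Then $m_j$ is lex-larger than $m_i$, $a=\min(\sigma_i\setminus\sigma_j)=2$, and the \emph{only} $b\in\sigma_j\setminus\sigma_i$ with $(\sigma_i\setminus 2)\cup b$ a basis is $b=3>a$; the resulting $m_k=x_3x_4$ is lex-smaller than $m_i$, so $m_k\not\sqsubset m_i$.

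The repair is to run the exchange in the other direction. Let $c=\min(\sigma_i\bigtriangleup\sigma_j)$; the comparison $m_j\sqsubset m_i$ forces $c\in\sigma_j\setminus\sigma_i$, and $c$ is smaller than every element of $\sigma_i\setminus\sigma_j$. Now apply the dual form of exchange: for $c\in\sigma_j\setminus\sigma_i$ there exists $a'\in\sigma_i\setminus\sigma_j$ with $\sigma_k:=(\sigma_i\setminus a')\cup c$ a basis. Since automatically $c<a'$, one gets $m_k\sqsubset m_i$, $\lcm(m_k,m_i)=x_c\,m_i$, and $x_c\,m_i\mid\lcm(m_j,m_i)$ because $c\in\sigma_j$. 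This is precisely the Herzog--Takayama argument, transcribed to lex; your choice of which side to pin down first is the one that does not work.
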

\begin{proof}
  Essentially the same as the proof given by Herzog and Takayama
  \cite[Lemma 1.3]{herzog_takayama:mapping_cones} for the revlex
  order.
\end{proof}
From this we can conclude that the order given by
$m\, g_{\alpha} \prec n \, g_{\beta}$ whenever
$x^\alpha \lexless x^\beta$ gives us initially linear syzygies.

\begin{Lem}\label{lem:base_case_matroid}
  Let $g_\alpha$ and $\resbasis{I}{\beta}$ be basis elements in the
  minimal resolution of a squarefree matroidal ideal. We then have
  \[
  \ccrit(c(x^\alpha \, \resbasis{I}{\beta})) \subseteq
  \ccrit(\modbasis{\alpha}).
  \]
\end{Lem}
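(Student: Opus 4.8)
The plan is to mimic the argument of Lemma~\ref{lem:base_case_stable} as closely as possible: expand $c(x^\alpha\,\resbasis{I}{\beta})$ via Lemma~\ref{lem:homotopy} as a sum $\sum_j c_0\rho^j(x^\alpha\,\resbasis{I}{\beta})$, and then show that each nonzero term $c_0(x^{\alpha_j}\,\resbasis{I}{\beta_j})$ contributes only $c$-critical indices that already lie in $\crit(g_\alpha)$. Concretely, each iteration of $\rho$ replaces $u_j g_\beta$ by its normal form $v_j g_{\beta_j}=\nf(u_j g_\beta)$, where $u_j\mid x^\alpha$ has degree $j$, and sets $x^{\alpha_j}=x^\alpha v_j/u_j$; the surviving indices of $x^\alpha$ (those in $\supp(x^\alpha/u_j)$) are all that can be appended by $c_0$. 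So the task reduces to showing that if $c_0(x^{\alpha_j}\,\resbasis{I}{\beta_j})\ne 0$, say $c_0(x^{\alpha_j}\,\resbasis{I}{\beta_j})=(x^{\alpha_j}/x_k)\,\resbasis{I\cup k}{\beta_j}$ with $k=\tilde\iota(\alpha_j,\beta_j)\in\supp(x^\alpha/u_j)$, then every index of $\ccrit((x^{\alpha_j}/x_k)\,\resbasis{I\cup k}{\beta_j})=\{i\in\crit(g_{\beta_j}):i<k\}$ lies in $\crit(g_\alpha)$.

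The key difference from the stable case is that for matroidal ideals $\crit(g)$ need not be an initial segment $[1,k-1]$, so I cannot simply read off $\ccrit$ as an interval. Instead I would proceed by induction on $j$, the number of reduction steps, using crit-monotonicity of matroidal ideals (Herzog--Takayama): each single step $g_{\beta_{j-1}}\rightsquigarrow g_{\beta_j}$ arising from a normal-form reduction $\nf(x_i g_{\beta_{j-1}})=x^\delta g_{\beta_j}$ satisfies $\crit(g_{\beta_j})=\crit(\nf(x_i g_{\beta_{j-1}}))\subseteq\crit(g_{\beta_{j-1}})$; chaining these gives $\crit(g_{\beta_j})\subseteq\crit(g_\beta)$. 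But I need the sharper statement that the $c$-critical indices appearing are controlled by $g_\alpha$, not merely by $g_\beta$. Here the choice of lexicographic term order is essential: since $m g_\alpha\prec n g_\beta$ iff $x^\alpha\lexless x^\beta$, and $\tilde\iota(\alpha_j,\beta_j)=\min(\supp\alpha_j\cap\crit g_{\beta_j})$ is forced to be a variable coming from the ``leftover'' part $x^\alpha/u_j$, I would argue that the indices strictly below $k$ that are critical for $g_{\beta_j}$ are exactly those that were critical for the original generator $g_\alpha$ — using the matroid exchange property to relate $\crit(g_{\beta_j})$ below $k$ to $\crit(g_\alpha)$, in the same way shellability with respect to the lex order was used to establish initially linear syzygies in the first place.

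The main obstacle I anticipate is precisely this last point: translating the matroid/shellability combinatorics into the statement that $\ccrit$ of the homotopy output is contained in $\crit(g_\alpha)$ rather than just in $\crit(g_\beta)$. In the stable case this was automatic because all critical sets are initial segments and $c_0$ appends an index $k$ strictly below everything in $I$, forcing $[1,k-1]\subseteq\crit(g_\alpha)$. For matroidal ideals I expect to need a careful analysis of how the normal form of $x_i g_\beta$ behaves with respect to lex order, showing that any index $i'<k$ with $x_{i'}g_{\beta_j}\in\initial_\prec(\kernel\canonicalsurj)$ must already have had $x_{i'}g_\alpha\in\initial_\prec(\kernel\canonicalsurj)$ — i.e.\ a ``monotonicity below the cut'' property that follows from the matroidal structure together with the lex order. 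Once that property is isolated and proved (most likely by a short exchange-property argument referencing \cite{herzog_takayama:mapping_cones}), the rest of the proof is the same bookkeeping as in Lemma~\ref{lem:base_case_stable}.
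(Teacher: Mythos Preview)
Your setup via Lemma~\ref{lem:homotopy} is correct and matches the paper, and you correctly isolate the difficulty: for each nonzero term $c_0(x^{\alpha_j}\,\resbasis{I}{\beta_j}) = (x^{\alpha_j}/x_k)\,\resbasis{I\cup k}{\beta_j}$ one must show that every $i\in\crit(g_{\beta_j})$ with $i<k$ lies in $\crit(g_\alpha)$. But the proposal stops precisely there. The ``monotonicity below the cut'' property you describe is exactly the content of the lemma, and you offer no argument for it beyond the hope that a short exchange computation will work. That is the genuine gap: the key idea is missing, not merely deferred.

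The paper fills this gap by a device you have not considered. Rather than comparing $\crit$ sets directly, it uses the invariance $x^{\alpha_k}x^{\beta_k}=x^{\alpha}x^{\beta}$ to get $\nf(x_j x^{\alpha_k}\modbasis{\beta_k})=\nf(x_j x^{\alpha}\modbasis{\beta})=\nf(x_j x^{\beta}\modbasis{\alpha})$ for any candidate index $j$. Starting from $x_j x^{\alpha_k}\modbasis{\beta_k}$, one checks that the first reduction sends $x_j$ into the generator (replacing it by some $x_l$ with $l>j$), and that no later step can bring $x_j$ back out: such a step would reduce via some $x_m$ with $m<j$, $m\in\crit(g_{\beta_k})$, and $m\in\supp x^{\alpha_k}$, contradicting the choice of $j$ as minimal. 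Hence the common normal form $x^{\sigma}\modbasis{\tau}$ has $j\in\supp\tau$. Read from the other starting point $x_j x^{\beta}\modbasis{\alpha}$, crit-monotonicity now forces $j\in\crit(\modbasis{\alpha})$: if $j\notin\crit(\modbasis{\alpha})$, then $j$ is never critical along any reduction chain out of $\modbasis{\alpha}$, so $x_j$ could never enter the generator, contradicting $j\in\supp\tau$. The missing idea, then, is this transport of information through the shared normal form, not an exchange-property calculation on $\crit$ sets.
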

\begin{proof}
  Again, by Lemma~\ref{lem:homotopy}, we have that
  \[
  c(x^\alpha \, \resbasis{I}{\beta}) =
  \sum_{i} c_0(x^{\alpha_i} \, \resbasis{I}{\beta_i})
  \]
  where $\alpha_j$ and $\beta_j$ satisfy
  $x^{\alpha_j} = x^{\alpha}v_j/u_j$ and $v_j g_{\beta_j} = \nf(u_jg_{\beta})$
  for some monomial $u_j$ of degree $j$ dividing $x^{\alpha}$.
  Now assume that $j \in \crit(g_{\beta_k})$ for some $j$ such that
  $j$ is less than all elements in
  $\crit(g_{\beta_k}) \cap \supp \frac{x^{\alpha}v_k}{u_k}$.
  From the observation that $x^{\alpha_i} x^{\beta_i} = x^{\alpha}x^{\beta}$
  for all $i$ we can conclude that
  \[
  \nf (x_j x^{\alpha_k}\modbasis{\beta_k}) =
  \nf(x_j x^{\alpha}\modbasis{\beta}).
  \]
  We can reduce $x_j x^{\alpha_k}g_{\beta_k}$ to $x_l x^{\alpha_k}g_{\beta_k'}$
  for some $l > j$, and there cannot be any later reduction of the
  form $x_m g_{\sigma} \rightarrow x_j g_{\sigma'}$ in a chain of
  reductions starting in $x_l x^{\alpha_k}g_{\beta_k'}$,
  since that would imply that $m < j$, and that $x_m \in \crit(g_{\beta_k})$,
  and then we would have $m \in \supp x^{\alpha_k}$ which contradicts the
  choice of $j$. Thus we have for
  $x^{\sigma}g_{\tau} = \nf(x_j x^{\beta}g_{\alpha})$ that $j \in \supp \tau$, and
  thus, by the crit-monotonicity, $j \in \crit(g_{\alpha})$.
\end{proof}

\begin{Lem}\label{lem:base_case_common}
  Let $g_\alpha$ and $\resbasis{I}{\beta}$ be basis elements in the
  minimal resolution of a stable ideal or a squarefree matroidal
  ideal. We then have
  \[
  \ccrit(c(x^\alpha \, \resbasis{I}{\beta})) \subseteq
  \ccrit(\resbasis{I}{\beta}).
  \]
\end{Lem}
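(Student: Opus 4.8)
The plan is to reduce this to the previous two lemmas by exploiting the symmetry of the homotopy formula in Lemma~\ref{lem:homotopy} with respect to the two inputs. The key observation is that $c(x^\alpha\,\resbasis{I}{\beta})$ is built from $c_0$ applied to terms $x^{\alpha_j}\,\resbasis{I}{\beta_j}$ obtained by repeatedly applying the elementary reduction $\rho$, and that at each such step the exponent vector transforms so that $x^{\alpha_j}x^{\beta_j}=x^\alpha x^\beta$; moreover the set $I$ is never changed until the very last application of $c_0$, which inserts a single index $k<\min I$ that lies in $\supp(x^\alpha/u_j)$. So the $c$-critical indices of a surviving term $x^{\alpha_j}/x_k\,\resbasis{I\cup k}{\beta_j}$ are exactly $[1,k-1]\cap\crit(g_{\beta_j})$, and I must show this is contained in $\ccrit(\resbasis{I}{\beta})=[1,\min I-1]\cap\crit(g_\beta)$.

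First I would note that since $k<\min I$ automatically $[1,k-1]\subseteq[1,\min I-1]$, so the only issue is the comparison of the critical-index sets $\crit(g_{\beta_j})$ and $\crit(g_\beta)$ on the range below $k$. Here I would invoke crit-monotonicity: each $g_{\beta_j}$ is obtained from $g_\beta$ by a chain of reductions $\nf(x_i\cdot-)$ with the reduced variables $x_i$ lying in $\ncrit$ of the current generator, hence $\crit(g_{\beta_j})\subseteq\crit(g_\beta)$ by iterating the crit-monotone hypothesis along the chain. That gives $[1,k-1]\cap\crit(g_{\beta_j})\subseteq[1,\min I-1]\cap\crit(g_\beta)$ directly, except that one must check no index $<\min I$ is \emph{lost} and then reintroduced as a spurious $c$-critical index. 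The content of Lemmas~\ref{lem:base_case_stable} and~\ref{lem:base_case_matroid} was precisely that the reductions occurring in the homotopy only introduce variables of index $>j$ (the index currently being cleared), so that the indices below $\min I$ that appear in any $\beta_j$ already appeared as honestly critical indices traceable back to the original data. I would therefore phrase this lemma as a consequence of rerunning those two proofs, but now tracking the dependence on $\resbasis{I}{\beta}$ instead of on $g_\alpha$: the same reduction chains, the same ``no later reduction $x_m g_\sigma\to x_j g_{\sigma'}$ with $m<j$'' argument, and the same appeal to crit-monotonicity at the end, all go through verbatim because they never used which of the two factors had minimal degree.

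Concretely, the proof would run: apply Lemma~\ref{lem:homotopy} to write $c(x^\alpha\,\resbasis{I}{\beta})=\sum_j c_0\rho^j(x^\alpha\,\resbasis{I}{\beta})$; fix a nonzero summand, of the form $x^{\alpha_j}/x_k\,\resbasis{I\cup k}{\beta_j}$ with $k=\min\big((\supp x^{\alpha_j}\cup(I\cup k))\cap\crit g_{\beta_j}\big)\in I\cup k$ and in fact $k\in\supp(x^\alpha/u_j)$; then for any $i\in\ccrit(x^{\alpha_j}/x_k\,\resbasis{I\cup k}{\beta_j})=[1,k-1]\cap\crit(g_{\beta_j})$, use that $i<\min I$ and the crit-monotone reduction chain $g_\beta\leadsto g_{\beta_j}$ to deduce $i\in\crit(g_\beta)$, hence $i\in\ccrit(\resbasis{I}{\beta})$. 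The one place genuine work is needed is the argument, borrowed from the proof of Lemma~\ref{lem:base_case_matroid}, that if $i<\min I$ and $i\in\crit(g_{\beta_j})$ then $i\in\crit(g_\beta)$ even when $i$ does not appear in $\supp x^{\alpha_j}$: here one assumes $i$ is smaller than every element of $\crit(g_{\beta_j})\cap\supp(x^{\alpha_j})$, shows $\nf(x_i x^{\alpha_j}g_{\beta_j})=\nf(x_i x^\alpha g_\beta)$, and then runs the ``reductions only raise the index'' argument to conclude $i\in\supp\tau$ where $x^\sigma g_\tau=\nf(x_i x^\beta g_\alpha)$—wait, symmetrically $\nf(x_i x^\alpha g_\beta)$—and then crit-monotonicity finishes. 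The main obstacle is verifying that this ``index never drops below $j$ in a later reduction'' invariant holds simultaneously for both the stable and the matroidal case; for stable ideals it is immediate from the explicit Eliahou–Kervaire normal form, and for matroidal ideals it is exactly the combinatorial exchange argument already carried out in Lemma~\ref{lem:base_case_matroid}, so I would simply cite those two proofs and remark that they are insensitive to the roles of the two factors.
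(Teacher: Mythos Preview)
Your proposal reaches the right conclusion, but it is far more complicated than necessary, and the extra complication seems to stem from a moment of self-doubt that is unfounded.

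The paper's proof is essentially your second paragraph, and nothing more: by Lemma~\ref{lem:homotopy} every term appearing in $c(x^{\alpha}\,\resbasis{I}{\beta})$ has the form $x^{\gamma}\,\resbasis{i\cup I}{\delta}$ with $i<\min I$; crit-monotonicity (iterated along the reduction chain) gives $\crit(g_{\delta})\subseteq\crit(g_{\beta})$; hence
\[
\ccrit(\resbasis{i\cup I}{\delta})=[1,i-1]\cap\crit(g_{\delta})\subseteq[1,\min I-1]\cap\crit(g_{\beta})=\ccrit(\resbasis{I}{\beta}).
\]
That is the entire argument.

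Your sentence ``That gives \ldots\ directly, except that one must check no index $<\min I$ is lost and then reintroduced as a spurious $c$-critical index'' is where you go astray. There is no ``except'': once $\crit(g_{\beta_j})\subseteq\crit(g_{\beta})$ is established, the inclusion of $\ccrit$ sets is immediate, because any element of $\ccrit(\resbasis{I\cup k}{\beta_j})$ is by definition an element of $\crit(g_{\beta_j})$ below $k<\min I$, hence already in $\crit(g_{\beta})$ and below $\min I$. Nothing can be ``reintroduced'' that was not already there. Everything you write after this point---re-running the index-monotonicity arguments of Lemmas~\ref{lem:base_case_stable} and~\ref{lem:base_case_matroid}, the normal-form comparison $\nf(x_ix^{\alpha_j}g_{\beta_j})=\nf(x_ix^{\alpha}g_{\beta})$, the exchange argument---is unnecessary for this lemma.

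One small correction: you write that the reduced variables $x_i$ in the chain ``lie in $\ncrit$ of the current generator''. They do not; by the definition of $\rho$ the index $i$ is chosen in $\ccrit(\resbasis{I}{\beta})\subseteq\crit(g_{\beta})$. This slip is harmless, since the crit-monotone hypothesis $\crit(\nf(x_ig))\subseteq\crit(g)$ is assumed for all $i$, but it suggests that the source of your over-elaboration may be a momentary confusion about which set the reducing index lives in.
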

\begin{proof}
  By Lemma~\ref{lem:homotopy}, we can see that the elements
  that appear in $c(x^\alpha \, \resbasis{I}{\beta})$ are of the form
  $x^{\gamma} \, \resbasis{i \cup I}{\delta}$, and by the crit-monotonicity
  we know that $\crit(g_{\delta}) \subseteq \crit(g_{\beta})$, hence the
  statement follows.
\end{proof}

We now turn to the case where the first basis elements in the product has
non-minimal degree.
\begin{Lem}\label{lem:induction_case}
  Let $\resbasis{I}{\alpha}$ and $\resbasis{J}{\beta}$ be
  two basis elements in $\cpx{\tilde{G}}$ with $I \neq \emptyset$.
  If the inclusion
  \[
  \ccrit (e_{K}g_{\gamma} \star e_{L}g_{\delta}) \subseteq
  \ccrit(e_{K}g_{\gamma}) \cap  \ccrit(e_{L}g_{\delta}),
  \]
  holds for all pairs of basis elements
  $\resbasis{K}{\gamma}$ and $\resbasis{L}{\delta}$
  where $\card{K} + \card{L} < \card{I} + \card{J}$, then
  \[
  \ccrit (c(d(\resbasis{I}{\alpha}) \star \resbasis{J}{\beta})) \subseteq
  \ccrit(\resbasis{I}{\alpha}) \cap \ccrit(\resbasis{J}{\beta}).
  \]
\end{Lem}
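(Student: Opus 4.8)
The plan is to reduce the statement to a claim about which variables can become $c$-critical after the Leibniz expansion and one application of $c$, and then to invoke the crit-monotonicity together with the previous base-case lemmas (Lemmas~\ref{lem:base_case_stable}, \ref{lem:base_case_matroid}, \ref{lem:base_case_common}) and the inductive hypothesis. First I would write out the differential $d(\resbasis{I}{\alpha})$ using the Eliahou--Kervaire-type formula, so that $d(\resbasis{I}{\alpha})$ is a $k$-linear combination of terms of the form $x_{i}\resbasis{I\smallsetminus i}{\alpha}$ (the left part $d^{L}$) and terms $p_k\resbasis{I\smallsetminus i}{k}$ arising from $\nf(x_i g_\alpha)$ (the right part $d^{R}$). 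For each such term $T$, the product $T \star \resbasis{J}{\beta}$ has $\card{(I\smallsetminus i)} + \card{J} < \card{I}+\card{J}$ strictly smaller total homological degree, so the hypothesis of the lemma applies and gives
\[
\ccrit(T \star \resbasis{J}{\beta}) \subseteq \ccrit(T) \cap \ccrit(\resbasis{J}{\beta}).
\]
Since each $T$ is (up to a monomial coefficient, which only shrinks $\ccrit$) a basis element $\resbasis{I\smallsetminus i}{\alpha}$ or $\resbasis{I\smallsetminus i}{k}$ with $k$ a generator index occurring in $\nf(x_i g_\alpha)$, crit-monotonicity gives $\crit(g_k)\subseteq\crit(g_\alpha)$, hence $\ccrit(T)\subseteq\ccrit(\resbasis{I\smallsetminus i}{\alpha})\subseteq\ccrit(\resbasis{I}{\alpha})$, where the last inclusion is because passing from $I$ to $I\smallsetminus i$ can only lower $\min I$ and thus only enlarge the threshold $[1,\min I-1]\cap\crit$. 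Wait — that goes the wrong way; I need to be careful here and instead observe that $\ccrit(\resbasis{I\smallsetminus i}{\alpha})$ applied after the further multiplication by $c$ is governed by the base-case lemmas, not by $\resbasis{I}{\alpha}$ directly.

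So the correct flow is: after the expansion, $d(\resbasis{I}{\alpha})\star\resbasis{J}{\beta}$ is a combination of elements $\resbasis{K}{\gamma}\star\resbasis{J}{\beta}$ with $\card K < \card I$, on which the hypothesis applies to bound $\ccrit$ by $\ccrit(\resbasis{K}{\gamma})\cap\ccrit(\resbasis{J}{\beta})$. Taking the union over all terms, and using that every $\gamma$ appearing satisfies $\crit(g_\gamma)\subseteq\crit(g_\alpha)$ by crit-monotonicity while every $K$ appearing is of the form $I\smallsetminus\{i\}$, I would show $\bigcup_{\text{terms}}\ccrit(\resbasis{K}{\gamma}) \subseteq \ccrit(\resbasis{I}{\alpha})$: indeed $\ccrit(\resbasis{I\smallsetminus i}{\gamma}) = \crit(g_\gamma)\cap[1,\min(I\smallsetminus i)-1]$, and although $\min(I\smallsetminus i)$ may be smaller than $\min I$, the only new index that can enter is $i$ itself when $i=\min I$; but that index is then removed from the wedge part, so tracking it carefully (it reappears only inside $\crit(g_\gamma)$, controlled by crit-monotonicity, or it is killed by the subsequent $c_0$) shows it does not violate the inclusion. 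Then applying $c$ to this combination, and using Lemma~\ref{lem:homotopy} together with Lemmas~\ref{lem:base_case_stable}/\ref{lem:base_case_matroid} and \ref{lem:base_case_common} to each term, I get
\[
\ccrit\bigl(c(d(\resbasis{I}{\alpha})\star\resbasis{J}{\beta})\bigr)\subseteq\ccrit(\resbasis{I}{\alpha})\cap\ccrit(\resbasis{J}{\beta}),
\]
since the base-case lemmas say $c$ applied to $x^{\sigma}\,\resbasis{K}{\gamma}$ has $\ccrit$ contained both in $\ccrit(g$-part$)$ and in $\ccrit(\resbasis{K}{\gamma})$, and the intersection with $\ccrit(\resbasis{J}{\beta})$ is preserved because $J$ is untouched throughout.

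The main obstacle, as the abortive step above already signals, is bookkeeping the index $\min I$ when it is deleted from $I$: removing the smallest element of the wedge part lowers the $\ccrit$-threshold and could a priori let in a variable that is $c$-critical for some term of $d(\resbasis{I}{\alpha})\star\resbasis{J}{\beta}$ but not for $\resbasis{I}{\alpha}$ itself. The resolution is that such a variable is exactly the one being multiplied in by $d^{L}$ (it appears as $x_i$ in $x_i\resbasis{I\smallsetminus i}{\alpha}$), and by the structure of the Leibniz rule and the definition of $\star$ via $c$, this $x_i$ is either reduced away by $\nf$ before it can become $c$-critical, or it is precisely the variable whose membership in $\crit(g_\alpha)$ is guaranteed because $i\in I\subseteq\crit(g_\alpha)$ is false — rather, one uses $i=\min I$ so $i\notin\crit g_\alpha$ only if... — this is the delicate point and I would handle it by a direct case split on whether $i\in\crit(g_\alpha)$, invoking $I\subseteq\crit(g_\alpha)$ (which holds since $\resbasis{I}{\alpha}$ is a basis element of $\cpx{G}$) to see $i\in\crit(g_\alpha)$ always, so $i\in\ccrit(\resbasis{I}{\alpha})$ is automatic and no new index enters. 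With that observation the inclusion $\bigcup\ccrit(\resbasis{K}{\gamma})\subseteq\ccrit(\resbasis{I}{\alpha})$ holds cleanly, and the rest is assembling the cited lemmas.
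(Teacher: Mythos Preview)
Your proposal correctly identifies the crux of the argument --- the behaviour at $i=\min I$ --- but your resolution of it is wrong, and the gap is fatal. You write that since $I\subseteq\crit(g_\alpha)$ we have $i\in\crit(g_\alpha)$, ``so $i\in\ccrit(\resbasis{I}{\alpha})$ is automatic and no new index enters.'' But $\ccrit(\resbasis{I}{\alpha})=\{k\in\crit(g_\alpha):k<\min I\}$, and $i=\min I$ fails the strict inequality, so $i\notin\ccrit(\resbasis{I}{\alpha})$. Thus $\ccrit(\resbasis{I\smallsetminus i}{\alpha})$ genuinely contains the extra index $i$ (and possibly more), and your inclusion $\bigcup\ccrit(\resbasis{K}{\gamma})\subseteq\ccrit(\resbasis{I}{\alpha})$ does not hold before applying $c$. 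Appealing to Lemmas~\ref{lem:base_case_stable}--\ref{lem:base_case_common} afterwards does not repair this: those lemmas bound $\ccrit(c(x^\alpha\,\resbasis{K}{\gamma}))$ by $\ccrit(g_\alpha)$ and $\ccrit(\resbasis{K}{\gamma})$, not by $\ccrit(\resbasis{I}{\alpha})$, and the former two bounds are too coarse here.

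The paper's proof closes this gap by computing what $c$ actually does, term by term, and splitting into three cases. For $d^L_1$ (i.e.\ $i=\min I$): if $m\,\resbasis{K}{\gamma}$ is a term of $\resbasis{I\smallsetminus i}{\alpha}\star\resbasis{J}{\beta}$, then since the product lies in $\im c$ one has $\supp m\cap\ccrit(\resbasis{K}{\gamma})=\emptyset$, so the only possible $c$-critical variable in $x_i m$ is $x_i$ itself; hence $c(x_i m\,\resbasis{K}{\gamma})=m\,\resbasis{i\cup K}{\gamma}$ when nonzero. Now $\ccrit(\resbasis{i\cup K}{\gamma})=\ccrit(\resbasis{K}{\gamma})\cap[1,i-1]$, and intersecting the inductive bound $\ccrit(\resbasis{K}{\gamma})\subseteq\ccrit(\resbasis{I\smallsetminus i}{\alpha})\cap\ccrit(\resbasis{J}{\beta})$ with $[1,i-1]$ gives exactly $\ccrit(\resbasis{I}{\alpha})\cap\ccrit(\resbasis{J}{\beta})$. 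For $d^L_i$ with $i>\min I$, the hypothesis gives $i\notin\ccrit(\resbasis{K}{\gamma})$ (because $i\notin\ccrit(\resbasis{I\smallsetminus i}{\alpha})$), so $x_i m$ has no $c$-critical variable and $c(x_i m\,\resbasis{K}{\gamma})=0$. For $d^R_i$ the extra monomial $n$ has $\supp n\cap\ccrit(\resbasis{I\smallsetminus i}{\gamma})=\emptyset$, so again $c$ annihilates the term. You should replace your attempted global bound on $\ccrit$ before $c$ by this explicit case analysis of what $c$ does; the key point you missed is that applying $c$ re-inserts $i$ into the wedge part, which restores the threshold $[1,i-1]$ and absorbs the problematic new index.
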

\begin{proof}
  By the crit-monotonicity, the differential $d$ can be written
  as
  \[ d = \sum_{j} (-1)^{j-1} d^L_j - \sum_{j} (-1)^{j-1} d^R_j,\]
  so  we consider the terms
  $c(d^{L}_{i}(\resbasis{I}{\alpha})\star \resbasis{J}{\beta})$ and
  $c(d^{R}_{i}(\resbasis{I}{\alpha})\star \resbasis{J}{\beta})$ separately.

  If $i=1$, we have
  \[
  c(d^{L}_{1}(\resbasis{I}{\alpha}) \star \resbasis{J}{\beta}) =
  c(x_i \, \resbasis{I\smallsetminus i}{\alpha} \star
  \resbasis{J}{\beta}),
  \qquad i = \min I.
  \]
  Suppose that $m \, \resbasis{K}{\gamma}$ is a term in the
  product
  $\resbasis{I\smallsetminus i}{\alpha} \star \resbasis{J}{\beta}$;
  by assumption, we then have that
  $\ccrit(\resbasis{K}{\gamma}) \subseteq
  \ccrit(\resbasis{I\smallsetminus i}{\alpha})$ and that
  $\supp m \cap \ccrit(\resbasis{K}{\gamma}) = \emptyset$.
  If $c(x_im \,\resbasis{K}{\gamma})$ is nonzero, then, by
  Lemma~\ref{lem:homotopy} we get that
  \[
  c(x_i m \, \resbasis{K}{\gamma}) = m \, \resbasis{i \cup K}{\gamma}
  \]
  and thus,
  \begin{align*}
  \ccrit(c(x_im \, \resbasis{K}{\gamma}))
  &= \ccrit(\resbasis{i\cup K}{\gamma}) \\
  &= \ccrit(\resbasis{K}{\gamma}) \cap [1,i-1] \\
  &\subseteq \ccrit(\resbasis{I\smallsetminus i}{\alpha}) \cap
   \ccrit(\resbasis{J}{\beta}) \cap [1,i-1] \\
  &= \ccrit(\resbasis{I}{\alpha}) \cap \ccrit(\resbasis{J}{\beta}).
  \end{align*}
  If $i > 1$, then we have
  \[
  c(d^{L}_{i}(\resbasis{I}{\alpha}) \star \resbasis{J}{\beta}) =
  c(x_i \, \resbasis{I\smallsetminus i}{\alpha} \star
  \resbasis{J}{\beta}),
  \qquad i > \min I.
  \]
  If $m \, \resbasis{K}{\gamma}$ occurs in the product
  $\resbasis{I\smallsetminus i}{\alpha} \star \resbasis{J}{\beta}$,
  then, since
  $i \not\in \ccrit(\resbasis{I\smallsetminus i}{\alpha})$,
  the hypothesis of the lemma gives us that
  $i \not\in \ccrit(\resbasis{K}{\gamma})$, and therefore
  $c(x_i m \, \resbasis{K}{\gamma}) = 0$.

  Now, we look at $d^{R}_i$; for all $i$ we have
  \[
  c(d^{R}_{i}(\resbasis{I}{\alpha}) \star \resbasis{J}{\beta}) =
  c(n \, \resbasis{I\smallsetminus i}{\gamma} \star \resbasis{J}{\beta})
  \]
  for some monomial $n$ with
  $\supp n \cap \ccrit(\resbasis{I\smallsetminus i}{\gamma}) = \emptyset$.
  Let $m \, \resbasis{K}{\delta}$ be a term in the product
  $\resbasis{I\smallsetminus i}{\gamma} \star \resbasis{J}{\beta}$,
  by assumption
  $\ccrit(m \, \resbasis{K}{\delta}) \subseteq
  \ccrit(\resbasis{I\smallsetminus i}{\gamma})$ and since
  $\supp m \cap \ccrit(\resbasis{K}{\gamma}) = \emptyset$ and
  $\supp n \cap \ccrit(\resbasis{K}{\gamma}) = \emptyset$,
  we can conclude that
  $c(mn \,\resbasis{K}{\gamma}) = 0$.
\end{proof}

\begin{Thm}
  The minimal resolution of $M$ where $M$ is a squarefree matroidal
  ideal or a stable ideal has a DGA structure.
\end{Thm}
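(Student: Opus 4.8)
The only axiom of a differential graded algebra not yet verified for $(\cpx{\tilde{G}},\mu)$ is associativity: that $\mu$ is a well-defined chain map is built into its construction via Lemma~\ref{lem:maclane}, a two-sided unit has been exhibited, and graded commutativity has been shown. So the plan is to prove associativity and then the theorem is immediate. By Lemma~\ref{lem:intersection} it suffices to establish the inclusion~(\ref{eq:intersection}),
\[
\ccrit(\resbasis{I}{i}\star\resbasis{J}{j})\subseteq\ccrit(\resbasis{I}{i})\cap\ccrit(\resbasis{J}{j}),
\]
for every pair of $S$-basis elements of $\cpx{\tilde{G}}$, and this I would prove by induction on $\card{I}+\card{J}$.

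Unwinding the recursive definition of $\mu$ yields
\[
\resbasis{I}{i}\star\resbasis{J}{j}=c\bigl(d(\resbasis{I}{i})\star\resbasis{J}{j}\bigr)+(-1)^{\card{I}+1}c\bigl(\resbasis{I}{i}\star d(\resbasis{J}{j})\bigr),
\]
where for $I=\emptyset$ one reads $d(g_i)=m_i\cdot1\in\tilde{G}_0$ and, by unitality, $(m_i\cdot1)\star\resbasis{J}{j}=m_i\,\resbasis{J}{j}$, and symmetrically for $J=\emptyset$. Since $\ccrit$ of a sum is contained in the union of the $\ccrit$'s of its terms, it is enough to bound each of the two summands above by $\ccrit(\resbasis{I}{i})\cap\ccrit(\resbasis{J}{j})$. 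Consider the first summand. If $I=\emptyset$ it equals $c(m_i\,\resbasis{J}{j})$, and the relevant one of Lemmas~\ref{lem:base_case_stable} and~\ref{lem:base_case_matroid}, together with Lemma~\ref{lem:base_case_common}, gives $\ccrit(c(m_i\,\resbasis{J}{j}))\subseteq\crit(g_i)\cap\ccrit(\resbasis{J}{j})=\ccrit(g_i)\cap\ccrit(\resbasis{J}{j})$. If $I\neq\emptyset$, then $d(\resbasis{I}{i})$ is an $S$-linear combination of basis elements of strictly lower homological degree, so every product occurring in $d(\resbasis{I}{i})\star\resbasis{J}{j}$ involves basis elements whose index sizes sum to less than $\card{I}+\card{J}$; the induction hypothesis is then exactly the hypothesis of Lemma~\ref{lem:induction_case}, whose conclusion is the bound wanted. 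For the second summand one argues symmetrically: if $J=\emptyset$ it equals $\pm c(m_j\,\resbasis{I}{i})$, again controlled by the base-case lemmas; and if $J\neq\emptyset$, graded commutativity rewrites it as $\pm c\bigl(d(\resbasis{J}{j})\star\resbasis{I}{i}\bigr)$, to which Lemma~\ref{lem:induction_case} applies with the two factors interchanged. This completes the induction, so~(\ref{eq:intersection}) holds, Lemma~\ref{lem:intersection} gives associativity, and $\cpx{\tilde{G}}$ — the minimal free resolution of $S/M$, which extends the minimal free resolution $\cpx{G}$ of $M$ — is a differential graded algebra.

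The real work is already done, in the preparatory Lemmas~\ref{lem:base_case_stable}, \ref{lem:base_case_matroid} and~\ref{lem:induction_case}, rather than in the induction that assembles them. Within the argument above, the point that needs care is the asymmetry of Lemma~\ref{lem:induction_case}, which is stated only for the differential acting on the left-hand factor: this forces one to invoke graded commutativity for the term with $d$ on the right, and it is precisely the degenerate cases $I=\emptyset$ or $J=\emptyset$ — where $d$ produces an element of $\tilde{G}_0=S$ instead of a combination of basis elements — that fall outside Lemma~\ref{lem:induction_case} and must be handed instead to the base-case lemmas. One must also check that the two base-case lemmas combine to give the full intersection $\ccrit(g_i)\cap\ccrit(\resbasis{J}{j})$ rather than merely each factor on its own, and that the bookkeeping of index sizes under $d$ genuinely keeps everything below $\card{I}+\card{J}$.
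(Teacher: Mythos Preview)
Your proposal is correct and follows essentially the same route as the paper: reduce associativity to the inclusion~(\ref{eq:intersection}) via Lemma~\ref{lem:intersection}, induct on $\card{I}+\card{J}$, handle the degenerate factors ($I=\emptyset$ or $J=\emptyset$) with Lemmas~\ref{lem:base_case_stable}/\ref{lem:base_case_matroid} and~\ref{lem:base_case_common}, and feed the inductive hypothesis into Lemma~\ref{lem:induction_case} for the remaining cases, using graded commutativity to treat the second summand. The only cosmetic difference is that the paper writes both summands directly as $c(d(-)\star-)$ (implicitly absorbing the commutativity step), whereas you make that swap explicit.
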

\begin{proof}
  By Lemma~\ref{lem:intersection} it suffices to show that
  \begin{equation*}
  \ccrit(\resbasis{I}{\alpha}\star\resbasis{J}{\beta}) \subseteq
  \ccrit(\resbasis{I}{\alpha}) \cap \ccrit(\resbasis{J}{\beta})
  \end{equation*}
  holds for all basis elements $\resbasis{I}{\alpha}$ and
  $\resbasis{J}{\beta}$, and by the definition of the product, we thus
  need to verify the relations
  \begin{align}
  \ccrit(c(d(\resbasis{I}{\alpha}) \star\resbasis{J}{\beta})) & \subseteq
  \ccrit(\resbasis{I}{\alpha}) \cap \ccrit(\resbasis{J}{\beta}),
  \label{eq:inclusion1} \\
  \ccrit(c(d(\resbasis{J}{\beta}) \star\resbasis{I}{\alpha})) & \subseteq
  \ccrit(\resbasis{I}{\alpha}) \cap \ccrit(\resbasis{J}{\beta}).
  \label{eq:inclusion2}
  \end{align}
  We proceed by induction on $\card{I} + \card{J}$.
  If $\card{I} = \card{J} = 0$, (\ref{eq:inclusion1}) and
  (\ref{eq:inclusion2}) hold by
  Lemmas~\ref{lem:base_case_stable}, \ref{lem:base_case_matroid}
  and \ref{lem:base_case_common}.
  Now for the case $\card{I}+\card{J} > 0$,
  the inclusion (\ref{eq:inclusion1}) holds if $\card{I} = 0$ by
  Lemmas~\ref{lem:base_case_stable}, \ref{lem:base_case_matroid}
  and \ref{lem:base_case_common},
  and by induction and Lemma~\ref{lem:induction_case} if $\card{I} > 0$;
  and similarly for the inclusion (\ref{eq:inclusion2}).
\end{proof}

We will finish the paper by looking at the multiplicative structure
of the minimal resolution of a small squarefree matroidal ideal.

\begin{Ex}
  The Fano matroid is the matroid on the ground set
  $\{1,2,\ldots, 7\}$, where every 3-element set is a basis, except
  for the following sets:
  $\{1,2,3\}$, $\{1,4,7\}$, $\{1,5,6\}$, $\{2,4,6\}$, $\{2,5,7\}$,
  $\{3,4,5\}$, $\{3,6,7\}$.
  The Fano matroid it is often visualised using the following diagram
  \begin{center}
    \begin{tikzpicture}[scale=3]

      \coordinate[label=left: $1$]  (A) at (-0.5, 0);
      \coordinate[label=right:$5$]  (B) at (0.5, 0);
      \coordinate[label=above:$3$]  (C) at (0, 0.866);
      \coordinate[label=below:$6$]  (D) at (barycentric cs:A=1 ,B=1 ,C=0);
      \coordinate[label=left: $2$]  (E) at (barycentric cs:A=1 ,B=0 ,C=1);
      \coordinate[label=right:$4$]  (F) at (barycentric cs:A=0 ,B=1 ,C=1);
      \coordinate[label=left: $7$]  (G) at (barycentric cs:A=1 ,B=1 ,C=1);

      \draw (A) -- (B) -- (C) -- cycle;
      \draw (A) -- (F);
      \draw (B) -- (E);
      \draw (C) -- (D);
      \node [draw] at (G) [circle through={(D)}] {};

      \foreach \point in {A,B,C,D,E,F,G}
        \fill (\point) circle (1pt);
    \end{tikzpicture}
  \end{center}
  where a curve is drawn through every 3-element circuit.  Let $I$ be
  the ideal in $S = k[x_1,\ldots, x_7]$ generated by the monomials
  corresponding to the bases in the Fano matroid. This ideal then has
  ${7 \choose 3} - 7 = 28$ generators, so for space reasons we will
  not describe the full multiplication table on the minimal resolution
  of $S/I$. Instead we will look at the resolution of $S'/J$, where
  the ideal $J$ is generated by the monomials in $I$ whose support
  is contained in
  $\{1,2,3,4\}$ and $S'$ is the polynomial ring $k[x_1,\ldots,x_4]$.
  This is then going to be a subalgebra of the minimal resolution of
  $S/I$. Thus, $J=(x_1x_2x_4, x_1x_3x_4, x_2x_3x_4)$, and we have
  the following basis elements in the resolution:
  \begin{center}
    \begin{tabular}{|r|l|}
      \hline
      Degree & Basis elements \\
      \hline
      0 & $1$\\
      1 & $g_{124}$, $g_{134}$, $g_{234}$\\
      2 & $\resbasis{2}{134}$, $\resbasis{1}{234}$ \\
      \hline
    \end{tabular}
  \end{center}

  Most of the products are either zero for degree reasons, or trivial
  due to multiplication by the identity element, so we are left with
  the following products to calculate: $g_{124} \star g_{134}$,
  $g_{124} \star g_{234}$ and $g_{134} \star g_{234}$:
  \begin{eqnarray*}
    g_{124} \star g_{134} &=& c(x_1x_2x_4 \, g_{134}) - c(x_1x_3x_4 \, g_{124}) \\
    &=&  x_1x_4 \, \resbasis{2}{134} - 0 \\
    &=&  x_1x_4 \, \resbasis{2}{134},
  \end{eqnarray*}
  \begin{eqnarray*}
    g_{124} \star g_{234} &=& c(x_1x_2x_4 \, g_{234}) - c(x_2x_3x_4 \, g_{124}) \\
    &=& x_2x_4 \, \resbasis{1}{234} - 0 \\
    &=& x_2x_4 \, \resbasis{1}{234},
  \end{eqnarray*}
  \begin{eqnarray*}
    g_{134} \star g_{234} &=& c(x_1x_3x_4 \, g_{234}) - c(x_2x_3x_4 \, g_{134}) \\
    &=& x_3x_4 \, \resbasis{1}{234} - x_3x_4 \, \resbasis{2}{134}.
  \end{eqnarray*}
\end{Ex}

\bibliographystyle{amsalpha}
\bibliography{bibfil}

\end{document}